\newcommand{\C}{\mathbb C}
\newcommand{\gl}{\mathfrak{gl}}
\newcommand{\g}{\mathfrak g}
\newcommand{\h}{\mathfrak h}
\newcommand{\spec}{\text{Spec\,}}
\newcommand{\inv}{^{-1}}
\newcommand{\V}{\mathcal{V}}
\newcommand{\hW}{W}
\newcommand{\nks}{\widetilde{N_k}}
\newcommand{\Dn}{D_n}
\newcommand{\Ghat}{\widehat{G}}
\newcommand{\GH}{\langle G, \mathfrak{h} \rangle}
\newcommand{\MX}{{\mathcal{M}_{X}}}
\newcommand{\GLNk}{\mathcal{GL}_X(N_k(\widehat{A}_n))}
\newcommand{\GL}{\mathcal{GL}}
\newcommand{\onto}{\twoheadrightarrow}
\newcommand{\into}{\hookrightarrow}
\newcommand{\NVB}{\mathcal{NVB}}
\newtheorem{thm}{Theorem}[section]
\newtheorem{cor}[thm]{Corollary}
\newtheorem{lem}[thm]{Lemma}
\newtheorem{prop}[thm]{Proposition}
\newtheorem{claim}[thm]{Claim}
\theoremstyle{definition}
\newtheorem{rem}[thm]{Remark}
\theoremstyle{definition}
\newtheorem{example}[thm]{Example}
\theoremstyle{definition}
\newtheorem{defn}[thm]{Definition}
\title{An algebro-geometric construction of lower central series of associative algebras}
\author{David Jordan \& Hendrik Orem} 
\begin{document}

\maketitle
\begin{abstract}
The lower central series invariants $M_k$ of an associative algebra $A$ are the two-sided ideals generated by $k$-fold iterated commutators; the $M_k$ provide a filtration of $A$. We study the relationship between the geometry of $X = \spec A_{ab}$ and the associated graded components $N_k$ of this filtration.  We show that the $N_k$ form coherent sheaves on a certain nilpotent thickening of $X$, and that Zariski localization on $X$ coincides with noncommutative localization of $A$.  Under certain freeness assumptions on $A$, we give an alternative construction of $N_k$ purely in terms of the geometry of $X$ (and in particular, independent of $A$).  Applying a construction of Kapranov, we exhibit the $N_k$ as natural vector bundles on the category of smooth schemes.
\end{abstract}

\section{Introduction}
In \cite{K}, Kapranov proposes the following viewpoint on non-commutative algebraic geometry.  Given a non-commutative algebra $A$ with abelianization $A_{ab}$, the natural surjection $A\onto A_{ab}$ can be understood as an inclusion $X=Spec(A_{ab})\into Spec(A)$, where $Spec(A)$ is the (would-be) spectrum of the algebra $A$.  Kapranov then proposes to study so-called NC-complete algebras: the NC-completion of an algebra $A$ is the ``formal neighborhood" to the embedding $X\into Spec(A)$; thus considering NC-complete algebras $A$ amounts to studying formal thickenings of $X$ into a non-commutative scheme.

A basic invariant of a non-commutative algebra $A$ is its lower central series, the descending filtration $L_\bullet=L_\bullet(A)$ by Lie ideals, defined recursively as follows:
$$L_1:=A,\quad L_k:=[A,L_{k-1}].$$
In other words, $L_k$ is spanned by iterated commutators, $[a_1,[a_2,[\cdots [a_{k-1},a_k]\cdots],$ for $a_i\in A$.  The lower central series ideals are $M_k:=AL_kA=AL_k$, and the associated graded components are $N_k:=M_k/M_{k+1}$.  In particular, we have $N_1=A_{ab}=\mathcal{O}(X)$.  
In fact, the components $N_k$ depend only on the NC-completion of $A$, so they are really invariants of Kapranov's formal neighborhood of $X$ in $A$, rather than of $A$ itself.

For several years, evidence has been mounting that -- despite their origin in non-commutative algebra -- the associated graded components $N_k$, and their Lie analogs $B_k:=L_k/L_{k+1}$, should admit descriptions in terms of the \emph{commutative} algebraic geometry of $X=Spec(A/M_2)$.  The first such evidence was provided by Feigin and Shoikhet \cite{fs} for the free algebra, $A_n$, on $n$ generators.  They constructed an isomorphism $A_n/M_3\cong \Omega^{ev}(\mathbb{A}^n)$, identifying the first two components $N_1\oplus N_2$ with the space of even-degree differential forms on $X=\C^n$, with Fedosov quantized product (see Example \ref{star-ex}).  In \cite{DE} and \cite{EKM}, it is shown more generally that the components $B_k(A_n)$ and $N_k(A_n)$, respectively, are finite extensions of tensor field modules on $\mathbb{A}^n$, with respect to a natural action of the Lie algebra $W_n$ of polynomial vector fields.  The papers \cite{AJ}, \cite{BJ}, \cite{BB}, \cite{BEJKL}, \cite{BoJ}, \cite{Ker}, all apply geometric techniques echoing \cite{fs} to compute $B_k(A)$ and $N_k(A)$ in various special cases.

The present paper aims to explain the geometric nature of the lower central series filtration in two essential ways:  Firstly, we prove that the components $N_k(A)$ are Zariski local on $X$, and are in fact coherent sheaves on a certain canonical nilpotent thickening of $X$.  Secondly, under the assumption that $A$ is locally free -- in particular, for Kapranov's NC-manifolds, and for formally smooth (a.k.a quasi-free) algebras of Cuntz-Quillen \cite{CQ} -- we give an alternative construction of the lower central series in terms of the formal geometry of $X$.  This construction implies that $N_k(A)$ is not only local on $X$, but it is in fact formal:  $N_k(A)$ is completely determined by its value in a formal neighborhood of any point $x\in X$, which in turn has a universal answer depending only on the dimension of $X$.

An outline of the paper is as follows.  In Section 2.2, for an arbitrary finitely generated algebra $A$, we begin by introducing a commutative product on the quotient $A/M_3$ as follows:
\newtheorem*{prop:astar-assoc}{Proposition \ref{prop:astar-assoc}}
\begin{prop:astar-assoc} The star-product, $a\star b:=\frac12(ab + ba),$ is associative and commutative on $A_\star:=(A/M_3,\star)$, and makes each $N_k(A)$ into a finitely generated $A_\star$-module.
\end{prop:astar-assoc}

Kapranov has shown in \cite{K} that NC-complete algebras $A$ admit flat Ore localizations with respect to multiplicative subsets $S\subset A/M_2$ without zero divisors. Our first main result -- the content of Section 3 -- is that Ore localization of an algebra $A$ coincides with the ordinary, \emph{commutative} localization on the $A_\star$-module $N_k(A)$:

\newtheorem*{thm:ncloc}{Theorem \ref{thm:ncloc}}
\begin{thm:ncloc}
Fix a multiplicative subset $\bar{S}\subset (A/M_2)$ without zero divisors, and let $S:=\pi^{-1}(\bar{S})\subset A$.  
The map,
$$m: A_\star[S\inv] \otimes_{A_\star} N_k(A)\xrightarrow{\sim} N_k(A[S\inv]),$$
$$a\otimes n \mapsto an,$$
is an isomorphism.
\end{thm:ncloc}

The significance of this theorem is that it allows us to work with $N_k(A)$ as a coherent sheaf $\nks(A)$ on $X_\star$, applying tools from commutative algebraic geometry, such as localizations, completions, and Hilbert's syzygy theorem, to perform computations.  Let $\widehat{A}_n$ denote the degree completion of the free algebra $A_n$.  As applications of Theorem \ref{thm:ncloc} we have:
\newtheorem*{cor:ncloc}{Corollary \ref{cor:ncloc}}
\begin{cor:ncloc}  The completion $N_k(A)_{(\mathfrak{m}_x)}$ at any smooth point $x\in X^{sm}$ is a quotient of $N_k(\widehat{A}_n)$, where $n=dim X$.\end{cor:ncloc}
\newtheorem*{cor:onedim}{Corollary \ref{cor:onedim}}
\begin{cor:onedim}
Suppose that $X=Spec(A/M_2)$ is a zero or one dimensional scheme with finitely many non-reduced points.  Then each $N_k(A)$ is finite dimensional, for $k\geq 2$.
\end{cor:onedim}
\newtheorem*{cor:syzygy}{Corollary \ref{cor:syzygy}}
\begin{cor:syzygy}
Suppose $A$ is the quotient of a free algebra on generators $x_1,\ldots, x_n$ in degrees $d_1,\ldots d_n$, by a collection of homogeneous non-commutative polynomials, so that $A$ is graded by degree.  Then the Hilbert series for each $N_k(A)$ is a rational function, with poles at $d_i$th roots of unity.
\end{cor:syzygy}

In Section 4, we turn our attention to locally free algebras.

\begin{defn}
An algebra $A$ is called \emph{locally free of rank $n$} if, for every $x\in X$, there exists an isomorphism $A_{(\mathfrak{m}_x+M_2)} \cong \hat{A}_n$.
\end{defn}
\begin{example}  There are many examples of locally free algebras:
\begin{enumerate}[(i)]
\item The free algebra $A_n$ is locally free of rank $n$.
\item The coordinate algebra $\mathcal{O}(X)$ of any connected smooth affine curve $X$ is locally free of rank one.
\item The free product of locally free algebras of ranks $m$ and $n$ is locally free of rank $m + n$.
\item For any smooth complete intersection $X=\C[x_1,\ldots,x_{n+m}]/\langle f_1,\ldots, f_m\rangle$, the algebra $A=A_{n+m}/\langle\tilde{f}_1,\ldots, \tilde{f}_m\rangle$, where each $\tilde{f}_i \in f_i + M_2$, is locally free of rank $\dim X = n$.
\item If $A$ is locally free, and if $\bar{S}$ is any multiplicative subset of $A/M_2$ without zero divisors, then $A[S^{-1}]$ is locally free, where $S=\pi^{-1}(\bar{S})$.
\item Any NC-smooth algebra (see \cite{K}, Proposition 1.5.1), including Kapranov's NC-smooth thickenings of any finitely generated smooth commutative algebra (see \cite{K}, Proposition 1.6.1), is locally free.
\item Any formally smooth (a.k.a quasi-free) algebra is locally free, by the formal tubular neighborhood theorem (\cite{CQ}, Section 6, Theorem 2).
\end{enumerate}
\end{example}

\begin{rem} We have isomorphisms:
$$(A/M_2)_{(\mathfrak{m}_x)} \cong A_{(\mathfrak{m}_x+M_2)}/M_2 \cong \widehat{A}_n/M_2\cong \C[[x_1\ldots x_n]].$$ Thus if $A$ is locally free of rank $n$, then  $X$ is smooth of dimension $n$.
\end{rem}

Under these assumptions, we give a completely new construction of $N_k(A)$ in the language of formal geometry on $X$.  Given a module $M$ over the Lie algebra $W_n$, which satisfies certain finiteness conditions, we have the globalization $\GL_X(M)$, a sheaf on $X$ which ``spreads" $M$ around $X$, using the $W_n$-action to produce transition functions (see Section \ref{sec-nvb} for a review of formal geometry).  Our second main result is:
\newtheorem*{thm:formloc}{Theorem \ref{thm:formloc}}
\begin{thm:formloc}
We have $\nks(A) \cong \GLNk$ as vector bundles on $X$.  In particular, we have $N_k(A) \cong \Gamma(X, \GLNk)$, as $U(Vect(X))\ltimes O(X)$-modules.
\end{thm:formloc}

Combining Theorems \ref{thm:ncloc} and \ref{thm:formloc}, we exhibit $N_k$ as a \emph{natural vector bundle} - a functorial assignment to any smooth scheme $X$ of a vector bundle over $X$; remarkably this characterization is completely independent of the choice of locally free lift $A$.  It is well known that the category of natural vector bundles is equivalent to the category of finite dimensional representations of vector fields on a formal disc vanishing at the origin; this allows us to reduce the problem of describing the components $N_k$ for all locally free algebras of a given rank to a single, finite dimensional problem in representation theory; some example computations are given at the end of Section \ref{sec-nvb}.

\subsection*{Acknowledgments}
The existence of a description of the lower central series ideals via formal geometry was conjectured by Pavel Etingof.  We are grateful to him for sharing his conjecture with us, and for many helpful comments and suggestions as the paper progressed.  We are also grateful to David Ben-Zvi for many explanations concerning Fedosov quantization, formal geometry and the geometric interpretation of \cite{K}.

\section{Preliminaries}
For the remainder of the paper, $A$ denotes a finitely generated associative algebra, and $X:=Spec(A/M_2(A))$.
\subsection{NC-nilpotent algebras and localizations}
In this subsection, we compare the filtration of $A$ by lower central series ideals to Kapranov's NC-filtration \cite{K}.

\begin{defn} We consider the following filtrations on $A$:
\begin{enumerate}
\item The lower central series filtration $L_\bullet$ is defined recursively by $L_1=A, L_k:=[A,L_{k-1}]$.  
\item The associative lower central series filtration $M_\bullet$ is defined by $M_k:=AL_kA = AL_k$.
\item The NC-filtration $F_\bullet$ is defined by $F_k = \sum_{i_1 + \cdots + i_m= k-m} M_{i_1}\cdots M_{i_m}.$
\end{enumerate}
\end{defn}
\noindent We denote by $B_k:=L_k/L_{k+1}, N_k:=M_k/M_{k+1}$ the associated graded components of each filtration.  

\begin{rem}
  In Sections \ref{sec:NkCoh} and \ref{sec:NkFormal} we apply geometric techniques to the graded components of the filtration $M_k$. The same results in fact hold for the associated graded components of $F_k$, with easier proofs (compare, for example, Theorem \ref{thm:ncloc} with Theorem 2.1.6 of \cite{K}).
\end{rem}

\begin{defn} An algebra $A$ is \emph{Lie complete} (resp, \emph{NC-complete}) if it is complete in the topology induced by the filtration $M_\bullet$ (resp, $F_\bullet$). Similarly, $A$ is \emph{Lie nilpotent} (resp, \emph{NC-nilpotent} if $M_N(A)=0$ (resp, $F_N(A) = 0$) for some $N$.\end{defn}

Clearly, we have that $M_k\subset F_{k-1}$.  Conversely, (\cite{Jen}, Theorem 2) proves that for any finitely generated algebra, and any $k$, there exists $N$ such that $M_2^N\subset M_k$. In particular, $F_N\subset M_k$. Thus we have:

\begin{claim} 
\label{claim:top}
A finitely generated algebra $A$ is NC-complete if, and only if, it is Lie complete. Similarly, $A$ is NC-nilpotent if and only if it is Lie nilpotent.\end{claim}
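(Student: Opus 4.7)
The plan is to show that the filtrations $M_\bullet$ and $F_\bullet$ are \emph{topologically equivalent}, in the sense that each eventually refines the other: (i) $M_{k+1} \subset F_k$ for every $k$, and (ii) for every $K$ there exists $N$ with $F_N \subset M_K$. Granted (i) and (ii), both halves of the claim follow at once. For nilpotency, $F_N = 0$ forces $M_{N+1} \subset F_N = 0$ by (i), while $M_K = 0$ forces $F_N \subset M_K = 0$ for an appropriate $N$ by (ii). For completeness, (i) and (ii) together say the two linear topologies on $A$ coincide, so the notions of Cauchy sequence and limit agree, and completeness with respect to one is equivalent to completeness with respect to the other.

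Containment (i) is noted in the excerpt and is immediate from the definition of $F_k$: $M_{k+1}$ (up to the indexing shift) appears as the length-one summand of the defining sum. This alone yields NC-nilpotent $\Rightarrow$ Lie nilpotent and NC-complete $\Rightarrow$ Lie complete, with no further input.

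The substantive step is (ii), where the key input is Jennings' theorem as quoted: for each $K$ there exists $N_0$ with $M_2^{N_0} \subset M_K$. The plan is to split a generating monomial $M_{i_1} \cdots M_{i_m}$ of $F_N$ into two cases by the number of factors $m$. When $m \geq N_0$, each factor lies in $M_2$ (since $i_j \geq 2$ in the defining convention), so the product lies in $M_2^m \subset M_2^{N_0} \subset M_K$. When $m < N_0$, the combinatorial constraint in the definition of $F_N$ (that the total commutator weight grows with $N$) forces $\max_j i_j \geq K$ once $N$ is chosen large enough relative to $K$ and $N_0$; then $M_{\max_j i_j} \subset M_K$, and because $M_K$ is a two-sided ideal of $A$ the entire product lies in $M_K$. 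Choosing $N$ on the order of $K \cdot N_0$ handles both cases simultaneously.

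The main obstacle is simply careful bookkeeping: one must unpack the precise indexing convention for $F_\bullet$ and verify that the combinatorial bound in the small-$m$ case survives the exact form of the defining relation. The two structural ingredients (Jennings' bound and two-sidedness of $M_K$) are both already in hand, so no deeper idea is needed beyond the case-split above.
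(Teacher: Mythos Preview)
Your proposal is correct and follows essentially the same approach as the paper: both directions rest on the containment $M_k \subset F_{k-1}$ together with Jennings' theorem that $M_2^{N_0} \subset M_K$ for suitable $N_0$. The paper compresses the deduction of $F_N \subset M_K$ from Jennings into a single ``In particular,'' whereas you have spelled out the natural case-split (many factors versus few factors with one large index) that justifies that step; this is exactly the right way to unpack it, and the bookkeeping you describe goes through without difficulty.
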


Following \cite{K}, we will use the term NC-nilpotent for what is called Lie nilpotent in \cite{EKM}.

\begin{prop}
\label{prop:oreloc}
Suppose $A$ is NC-nilpotent.  Let $\overline{S} \subset (A/M_2)$ be a finitely generated multiplicative set without zero divisors, and define $S = \pi\inv(\overline{S})$, where $\pi: A \to A/M_2$ is the quotient map.Then: 
  \begin{enumerate}[(1)]
    \item $S$ satisfies the Ore condition.
    \item $A[S^{-1}]$ is finitely generated.
    \item $A[S^{-1}]$ is NC-nilpotent.
  \end{enumerate}
\end{prop}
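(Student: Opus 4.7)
The plan is to reduce each assertion to a concrete identity flowing from NC-nilpotence, with (1) the technical heart and (2), (3) following from it. For (1), let $L_s, R_s$ denote left and right multiplication by $s$ and $\text{ad}_s = L_s - R_s$. Since $R_s$ and $\text{ad}_s$ commute as operators on $A$,
$$a s^n \;=\; (L_s - \text{ad}_s)^n(a) \;=\; \sum_{k=0}^{n} (-1)^k \binom{n}{k}\, s^{n-k}\, \text{ad}_s^k(a).$$
As $\text{ad}_s^k(a) \in L_{k+1}(A)$ and $L_N(A) = 0$ by NC-nilpotence, the sum truncates at $k = N-2$. For $n \geq N-1$ every surviving term carries $s$ on the left, so we may factor
$$a s^n \;=\; s \cdot a', \qquad a' \;:=\; \sum_{k=0}^{N-2} (-1)^k \binom{n}{k}\, s^{n-k-1}\, \text{ad}_s^k(a) \;\in\; A,$$
giving the right Ore relation $a s' = s a'$ with $s' := s^n \in S$ (the left version is symmetric). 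Reversibility follows from an induction showing $\text{ad}_s^k(a) = (-1)^k a s^k$ whenever $sa = 0$, so NC-nilpotence forces $a s^{N-1} = 0$ and $s' := s^{N-1}$ works.

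For (2), fix lifts $\tilde s_1, \dots, \tilde s_\ell \in S$ of a finite generating set of $\bar S$. Any $s \in S$ has the form $\tilde s_{i_1}\cdots\tilde s_{i_r} + m$ with $m \in M_2$, and the Jennings-type bound $M_2^R \subset M_N = 0$ cited just before Claim \ref{claim:top} makes $M_2$ nilpotent, so the geometric series for $s^{-1}$ in $m$ and $(\tilde s_{i_1}\cdots\tilde s_{i_r})^{-1}$ terminates and presents $s^{-1}$ as a polynomial in $A$ and $\tilde s_1^{-1}, \dots, \tilde s_\ell^{-1}$. Hence $A[S^{-1}]$ is generated as a $k$-algebra by any finite generating set of $A$ together with the $\tilde s_i^{-1}$.

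For (3), I work with the multiplicative NC-filtration $F_\bullet$; by Claim \ref{claim:top}, NC-nilpotence of $A$ (resp.\ $A[S^{-1}]$) is equivalent to $F_N = 0$ for some $N$. The key feature of the Ore identity from (1) is that it preserves each two-sided ideal $M_i(A)$: since $\text{ad}_s$ preserves $M_i$, applying the formula to any $m \in M_i(A)$ yields $s^{-1} m = m' s^{-n}$ with $m' \in M_i(A)$. Iterating, a product $m_1 s^{-1} m_2 s^{-1} \cdots m_r s^{-1}$ with $m_j \in M_{i_j}(A)$ can be rewritten with all $s^{-1}$'s pushed to the right, so that the numerator lies in $M_{i_1}(A)\cdots M_{i_r}(A)$. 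An induction on filtration degree then gives $F_k(A[S^{-1}]) \subset F_k(A) \cdot A[S^{-1}]$; taking $k = N$ yields $F_N(A[S^{-1}]) = 0$ and hence NC-nilpotence.

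I expect the main obstacle to be (3), specifically that individual components $M_k(A[S^{-1}])$ can strictly exceed the extended ideals $M_k(A) \cdot A[S^{-1}]$ — commutators involving $s^{-1}$ can produce new $L_k$-classes even when $M_k(A) = 0$ — so a direct induction on $M_\bullet$ does not close. Passing to $F_\bullet$, which is multiplicative by design and hence compatible with the Ore reordering, and converting back via Claim \ref{claim:top}, is the essential technical move.
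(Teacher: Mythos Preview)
Your proof of (1) is correct and more explicit than the paper's, which simply cites Kapranov's Proposition~2.1.5; the binomial identity $as^n=\sum_k(-1)^k\binom{n}{k}s^{n-k}\mathrm{ad}_s^k(a)$ is a clean self-contained replacement.  Your (2) follows the same geometric-series idea as the paper.  One point is elided: the series $\sum_i\bigl((\tilde s_{i_1}\cdots\tilde s_{i_r})^{-1}m\bigr)^i$ lives in $A[S^{-1}]$, so nilpotence of $M_2(A)$ in $A$ is not literally what makes it terminate.  The paper resolves this by proving (3) first and then invoking NC-nilpotence of $A[S^{-1}]$ together with Kapranov's identity $ms^{-1}=\sum_j s^{-j-1}\mathrm{ad}_s^j(m)$.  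In your ordering you can instead reuse your own Ore identity from (1): iterating $s^{-1}m=m's^{-n}$ with $m'\in M_2(A)$ gives $(t^{-1}m)^i\in M_2(A)^i\cdot A[S^{-1}]=0$ for $i$ large.  This is easy but should be said.

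The substantive gap is in (3).  Your reordering argument shows that any product $m_1s^{-1}m_2s^{-1}\cdots m_rs^{-1}$ with $m_j\in M_{i_j}(A)$ can be rewritten as $(m_1'\cdots m_r')\cdot t^{-1}$ with $m_j'\in M_{i_j}(A)$.  But $F_k(A[S^{-1}])$ is, by definition, built from $M_{i_j}(A[S^{-1}])=A[S^{-1}]\cdot L_{i_j}(A[S^{-1}])$, i.e.\ from iterated commutators \emph{taken in $A[S^{-1}]$}, and you have not shown that such commutators can be expressed as products of the form your reordering handles.  The phrase ``an induction on filtration degree'' hides exactly the nontrivial step: one must prove something like $L_i(A[S^{-1}])\subset F_{i-1}(A)\cdot A[S^{-1}]$ (or at least $M_2(A[S^{-1}])\subset A[S^{-1}]\cdot M_2(A)\cdot A[S^{-1}]$ together with an inductive control on deeper commutators), and this requires repeatedly unwinding identities such as $[s^{-1},a]=-s^{-1}[s,a]s^{-1}$ while tracking filtration.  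Multiplicativity of $F_\bullet$ helps once that step is in place, but does not by itself reduce $M_i(A[S^{-1}])$ to $M_i(A)$-data.  This is precisely the content of Kapranov's Theorem~2.1.6, which the paper invokes directly; if you want a self-contained argument, that unwinding is the missing piece.
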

\begin{proof}
Claim (1) is precisely Proposition 2.1.5 in \cite{K}.  By Claim \ref{claim:top}, $F_\bullet$ and $M_\bullet$ induce the same completion, and hence $A$ is nilpotent for one filtration if and only if it is nilpotent for the other.  Claim (3) thus follows from Theorem 2.1.6 in \cite{K}.

To establish finite generation, we show that $(S+M_2)\inv$ is generated by $A$ and $S^{-1}$.  For $s\in S$ and $m\in M_2$, we express $(s+m)\inv$ as a geometric series,
  \begin{align*}
    (s + m)\inv &= s\inv(1 + s\inv m)\inv \\
    &= s\inv \left( \sum_{i \geq 0} (-s\inv m)^i \right),
  \end{align*}
which is seen to be a finite sum using the NC-nilpotence of $A$, and identity (2.1.7.2) in \cite{K}:
  \[ ms\inv = s\inv m + s^{-2}[s, m] + s^{-3}[s, [s, m]] + \cdots \]
\end{proof}

\subsection{The algebra $A_\star$}
The algebra $A/M_3$ carries a commutative multiplication, in addition to the non-commutative multiplication induced from $A$.  We have:

\label{sec:Astar}
\begin{prop}\label{prop:astar-assoc}
The star-product, $a\star b:=\frac12(ab + ba),$ is associative on $A_\star:=(A/M_3,\star)$, and the action $a\star n := \frac12 (a n + n a)$, for $a\in A_\star,n\in N_k(A)$ makes each $N_k(A)$ into an $A_\star$-module.
\end{prop}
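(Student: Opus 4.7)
The plan is to handle both claims through a single associator computation, reducing the rest to standard ideal-absorption properties of the lower central series filtration.

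First, for algebra associativity, I would expand the two nested products directly in $A$:
$$(a\star b)\star c = \tfrac{1}{4}(abc + bac + cab + cba), \quad a\star(b\star c) = \tfrac{1}{4}(abc + acb + bca + cba).$$
Subtracting and regrouping yields
$$(a\star b)\star c - a\star(b\star c) = \tfrac{1}{4}\bigl(b[a,c] - [a,c]b\bigr) = \tfrac{1}{4}\,[b,[a,c]],$$
which lies in $L_3 \subseteq M_3$. Hence $\star$ descends to an associative operation on $A_\star = A/M_3$.

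For the module action, the same manipulation with a representative $n\in M_k$ in place of $c$ yields associator $\tfrac{1}{4}[b,[a,n]]$, and the task is to show this lies in $M_{k+1}$. I would establish the auxiliary containment $[A, M_k] \subseteq M_{k+1}$ as follows: writing a generator of $M_k = AL_k$ as $bl$ with $l \in L_k$, the Leibniz identity $[a, bl] = [a,b]\,l + b[a,l]$ reduces the claim to (i) $b[a,l] \in AL_{k+1} = M_{k+1}$, immediate from $[a,l] \in L_{k+1}$, and (ii) $L_2 \cdot L_k \subseteq M_{k+1}$, a standard inductive consequence of $[L_i, L_j] \subseteq L_{i+j}$ together with the $M$-absorption identity below. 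Granting $[A, M_k] \subseteq M_{k+1}$, we then have $[b,[a,n]] \in [A, M_{k+1}] \subseteq M_{k+2} \subseteq M_{k+1}$. Well-definedness of the action on the quotient $N_k$ further requires $M_3 \cdot M_k \subseteq M_{k+1}$, a special case of the general absorption identity $M_i M_j \subseteq M_{i+j-1}$ that appears in the lower central series literature (see for instance \cite{fs}, \cite{EKM}) and is proved by an analogous Leibniz-rule induction.

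The main obstacle is the pair of ideal containments $[A, M_k] \subseteq M_{k+1}$ and $M_i M_j \subseteq M_{i+j-1}$: they look innocent but are intertwined, so that a naive induction on one keeps spawning instances of the other. The cleanest execution is to prove them simultaneously by strong induction on the sum of indices, using $M_k = AL_k = L_kA$ and the Lie absorption $[L_i, L_j] \subseteq L_{i+j}$ as the underlying scaffold. Once these are in hand, the proposition itself is a purely formal verification, since commutativity of $\star$ on $A$ is manifest from the symmetric definition.
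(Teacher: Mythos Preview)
Your associator computation $(a\star b)\star c - a\star(b\star c) = \tfrac14[b,[a,c]]$ is correct and is exactly what the paper does. The gap is in what comes next: the containments $[A,M_k]\subseteq M_{k+1}$ and $M_iM_j\subseteq M_{i+j-1}$ that you invoke are both \emph{false} in general. In the free algebra $A_4$ take $m=x_1[x_2,x_3]\in M_2$; then
\[
[x_4,m]=[x_4,x_1][x_2,x_3]+x_1[x_4,[x_2,x_3]],
\]
and the first term maps under the Feigin--Shoikhet isomorphism to the nonzero $4$-form $4\,dx_4\wedge dx_1\wedge dx_2\wedge dx_3$, hence does not lie in $M_3$. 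The same example shows $L_2L_2\not\subseteq M_3$ and $M_2M_2\not\subseteq M_3$, so the ``standard inductive'' route you sketch cannot close. The product containment from \cite{BJ} that actually holds is $M_iM_j\subseteq M_{i+j-1}$ only when $i$ is \emph{odd}.

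The paper avoids this by proving directly the weaker statement that is exactly tailored to the associator, namely $[A,[A,M_k]]\subseteq M_{k+1}$ (its Lemma~\ref{lem:jump2}). Writing a generator of $M_k$ as $cd$ with $d\in L_k$ and expanding $[a,[b,cd]]$ by Leibniz produces the terms $[a,[b,c]]d$, $[b,c][a,d]$, and $[a,c[b,d]]$; the last two lie in $M_{k+1}$ trivially (they contain a factor in $L_{k+1}$), while the first lies in $M_3M_k\subseteq M_{k+2}$ by the odd-index product lemma. Your well-definedness step $M_3\cdot M_k\subseteq M_{k+1}$ is salvageable for the same reason (indeed $M_3M_k\subseteq M_{k+2}$), but you should cite the parity-restricted version rather than the general one.
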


The proof of Proposition \ref{prop:astar-assoc} depends on the following two lemmas:

\begin{lem}[Corollary 6.4 in \cite{BJ}]
If $i$ is odd, then $M_iM_j \subset M_{i+j-1}$.
\label{lem:M2}
\end{lem}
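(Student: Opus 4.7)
Since $M_i = AL_i$ and $M_{i+j-1}$ is a two-sided ideal, it suffices to prove $L_iM_j \subset M_{i+j-1}$ for odd $i$. I would proceed by induction on $i$ restricted to odd values, using the Leibniz-type identity
\[ [a,w]\, y \;=\; [a, wy] - w[a,y], \]
which is equivalent to the derivation rule $[a, wy] = [a,w]\,y + w[a,y]$ for the inner derivation $[a,-]$. The base case $i = 1$ is immediate: $L_1 M_j = A M_j \subset M_j = M_{1+j-1}$.

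For the inductive step with odd $i \geq 3$, I would write a typical generator of $L_i$ as $[[a,b], v]$ with $a, b \in A$ and $v \in L_{i-2}$ (using $[L_2, L_{i-2}] \subset L_i$ and Jacobi rearrangements to reach all of $L_i$ modulo lower order). Applying the displayed identity with $x = [a,b]$ yields
\[ [[a,b], v]\cdot y \;=\; \bigl[[a,b],\, vy\bigr] - v \cdot \bigl[[a,b],\, y\bigr]. \]
By the inductive hypothesis, $vy \in L_{i-2}M_j \subset M_{i+j-3}$ and, with target $M_{j+2}$, also $v \cdot M_{j+2} \subset L_{i-2} M_{j+2} \subset M_{i+j-1}$. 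An auxiliary bracket estimate $[L_2, M_k] \subset M_{k+2}$ then places $[[a,b], vy] \in M_{i+j-1}$ and $[[a,b], y] \in M_{j+2}$, so both terms of the right-hand side lie in $M_{i+j-1}$.

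\textbf{Main obstacle.} The auxiliary estimate $[L_2, M_k] \subset M_{k+2}$ is itself intertwined with the lemma: expanding $[L_2, AL_k]$ via Leibniz reduces it to the containment $L_3 L_k \subset M_{k+2}$, which is precisely the $i=3$ case of the lemma. Thus the induction must be organized carefully, bootstrapping the $i=3$ case and the auxiliary estimate in tandem from the base. The deeper obstruction is that the lemma is genuinely \emph{false} for even $i$: in the free algebra $A_n$ with $n \geq 4$, the element $[x_1, x_2][x_3, x_4]$ does not lie in $M_3$, since under the Feigin--Shoikhet isomorphism $A_n/M_3 \cong \Omega^{\mathrm{ev}}(\mathbb{A}^n)$ it corresponds to the nonzero form $dx_1 \wedge dx_2 \wedge dx_3 \wedge dx_4$. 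Consequently the parity hypothesis cannot be dropped, and the proof must track parity throughout, exploiting the antisymmetry of odd-length iterated commutators under the word-reversal anti-involution of $A$ at each stage in order to push the problematic even-case contributions into sufficiently high $M_k$.
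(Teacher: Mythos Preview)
The paper does not supply its own proof of this lemma: it is quoted verbatim as ``Corollary 6.4 in \cite{BJ}'' and used as a black box. So there is no in-paper argument to compare against; the relevant comparison is whether your sketch could stand in for the cited result.

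Your reduction to $L_iM_j\subset M_{i+j-1}$ and the base case are fine, but the inductive step has a genuine gap. You propose to write a typical element of $L_i$ as $[[a,b],v]$ with $a,b\in A$ and $v\in L_{i-2}$, i.e.\ to use $L_i=[L_2,L_{i-2}]$ (up to ``Jacobi rearrangements''). This is false already for $i=5$ in the free algebra on two generators: by Witt's formula $\dim L_5=6$, whereas $L_2$ is one-dimensional and $L_3$ two-dimensional, so $[L_2,L_3]$ is at most two-dimensional. The Jacobi identity applied to $[a,[b,v]]$ yields $[[a,b],v]+[b,[a,v]]$, and the second term is again an arbitrary element of $L_i$; nothing terminates. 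Hence the identity $[[a,b],v]\,y=[[a,b],vy]-v[[a,b],y]$ is not being applied to a spanning set, and the induction does not close. The circularity you flag between the case $i=3$ and the auxiliary estimate $[L_2,M_k]\subset M_{k+2}$ is a separate issue and would also need to be resolved.

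Your final paragraph points in the right direction: the argument in \cite{BJ} does hinge on the word-reversal anti-involution $\sigma$ of $A$, under which an iterated commutator of length $i$ picks up the sign $(-1)^{i-1}$. For odd $i$ this forces $L_i$ (and more) into the $+1$-eigenspace, and the proof exploits that symmetry rather than a decomposition of $L_i$ via $[L_2,L_{i-2}]$. If you want a self-contained proof, you should work directly with the natural generators $[a,w]$, $w\in L_{i-1}$, and use the anti-involution to control the even-step terms, rather than trying to step the induction by two through a nonexistent spanning set.
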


\begin{lem}\label{lem:jump2}	We have the containment $[A,[A,M_k]] \subset M_{k+1}$.\end{lem}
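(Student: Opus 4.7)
The plan is to apply the Leibniz identity $[a,bc] = [a,b]c + b[a,c]$ twice, starting from the presentation $M_k = AL_k$, and then bound each resulting term using one of three tools: the recursive definition $[A, L_j] = L_{j+1}$; the fact that every $M_j$ is a two-sided ideal, so $A \cdot M_j \subset M_j$; and Lemma \ref{lem:M2}.

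A single Leibniz expansion gives $[A, M_k] = [A, AL_k] \subset L_2 L_k + A L_{k+1}$. Applying $[A,-]$ once more and distributing, I must control $[A, L_2 L_k]$ and $[A, AL_{k+1}]$. For $[A, AL_{k+1}]$, another Leibniz step lands everything in $L_2 L_{k+1} + AL_{k+2}$: the first piece sits inside $M_2 M_{k+1} \subset M_{k+1}$ because $M_{k+1}$ is two-sided, and the second is just $M_{k+2} \subset M_{k+1}$. For $[A, L_2 L_k]$, Leibniz yields $L_3 L_k + L_2 L_{k+1}$; the second summand is handled exactly as above, while the first lies in $M_3 M_k$.

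The one substantive step is bounding this last term, where Lemma \ref{lem:M2} does the real work: since $3$ is odd, it gives $M_3 M_k \subset M_{3+k-1} = M_{k+2} \subset M_{k+1}$. Without that sharpening, $L_3 L_k$ would only be known to lie in $M_k$, which is too coarse for the conclusion. Everything else is straightforward bookkeeping driven by the Leibniz rule and the ideal property of the $M_j$, so I expect the invocation of Lemma \ref{lem:M2} to be the only nontrivial ingredient.
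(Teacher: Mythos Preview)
Your argument is correct and is essentially the same as the paper's: both expand an element of $[A,[A,AL_k]]$ via two applications of the Leibniz rule and then bound each summand, with the only nontrivial term $L_3L_k\subset M_3M_k$ handled by Lemma~\ref{lem:M2}. The paper's write-up is slightly terser---it stops after producing the three terms $[a,[b,c]]d$, $[b,c][a,d]$, and $[a,c[b,d]]$ and asserts they all lie in $M_{k+1}$---but your more explicit bookkeeping (expanding the last piece into $L_2L_{k+1}+AL_{k+2}$) amounts to the same computation.
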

\begin{proof}
We let $a,b,c \in A, d\in L_k$, and apply Leibniz identity:
$$ [a,[b,cd]] = [a,[b,c]d] + [a,c[b,d]] = [a,[b,c]]d + [b,c][a,d] + [a,c[b,d]].$$
Applying Lemma \ref{lem:M2}, we have that all three terms on the right are in $M_{k+1}$.  
\end{proof}

\begin{proof}[Proof of Proposition \ref{prop:astar-assoc}]
To see that $\star$ is associative, we compute:
$$ (a\star b)\star c - a\star (b\star c) = \frac14\left((ab+ba)c +c(ab+ba) - a(bc+cb) - (bc+cb)a\right) = \frac14[b,[a,c]],$$
and thus associativity holds, modulo $M_3$.

The containment $M_3M_k\subset M_{k+2}$ of Lemma \ref{lem:M2} shows the map of vector spaces $A/M_3\otimes N_k\to N_k$ is well-defined.  Associativity of the action follows from Lemma \ref{lem:jump2}; for $n\in M_k$, we have:
$$(a \star b) \star n - a \star (b \star n) =  \frac{1}{4}[b,[a,n]] = 0 \mod M_{k+1}.$$
\end{proof}

\begin{defn}
We denote by $A_\star$ the algebra ($A/M_3, \star$), and we let $X_\star:=Spec A_\star$.
\end{defn}

\begin{example}\label{star-ex}
When $A = A_n$, the Feigin-Shoikhet isomorphism \cite{fs} identifies $A/M_3$ with the algebra $\Omega^{ev}(\C^n)$ of even degree differential forms on $\C^n$, equipped with the Fedosov quantized product,
$$\omega \nu := \omega\wedge\nu + d\omega\wedge d\nu.$$
In this case $A_\star$ is the commutative algebra of even-degree differential forms, with ordinary product.  \end{example}

\section{Lower central series components as coherent sheaves on $X_\star$}
\label{sec:NkCoh}
Throughout this section we assume that $A$ is an NC-nilpotent algebra, with $M_N(A)=0$.  In the previous section, we endowed each $N_k(A)$ with the structure of a module for the commutative algebra $A_\star$, and hence obtained a quasi-coherent sheaf $\widetilde{N_k}$ on $X_\star$.  On the other hand, $N_k$ also determines a pre-sheaf on $X_\star$ by the assignment on affine opens $U_{\bar{S}}\subset X_\star \mapsto N_k(A[S^{-1}])$.  The purpose of this section is to show that the two pre-sheaves obtained this way coincide; this is the content of:
\begin{thm}
Fix a multiplicative subset $\bar{S}\subset (A/M_2)$ without zero divisors, and let $S:=\pi^{-1}(\bar{S})\subset A$.  
The map,
$$m: A_\star[S\inv] \otimes_{A_\star} N_k(A)\xrightarrow{\sim} N_k(A[S\inv]),$$
$$a\otimes n \mapsto an,$$
  \label{thm:ncloc}
is an isomorphism.
\end{thm}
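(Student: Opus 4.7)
The plan is to prove surjectivity and injectivity of the map $m$ separately, with both steps relying on Kapranov's commutator identity (2.1.7.2),
$$xs^{-1} = s^{-1}x + s^{-2}[s,x] + s^{-3}[s,[s,x]] + \cdots,$$
which terminates because $A[S^{-1}]$ is NC-nilpotent (by Proposition \ref{prop:oreloc}(3)). Throughout, Lemma \ref{lem:jump2} supplies the filtration-raising mechanism: two nested commutators with elements of $A$ raise the $M_\bullet$-degree by one, so all tail terms of Kapranov's expansion from $s^{-3}[s,[s,x]]$ onward lie in $M_{k+1}(A[S^{-1}])$ whenever $x \in M_k(A)$.

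The central technical step is a denominator-clearing lemma: for every $k$, every element of $M_k(A[S^{-1}])$ is congruent modulo $M_{k+1}(A[S^{-1}])$ to some $s^{-1}x$ with $s\in S$ and $x\in M_k(A)$. I would prove this by expanding a general $k$-fold commutator of elements of $A[S^{-1}]$ via Leibniz, then systematically moving each denominator to the far left via Kapranov's identity; by the observation above, correction terms terminate quickly and land in $M_{k+1}(A[S^{-1}])$ after a bounded number of moves. Partial fractions are then consolidated into a single left denominator by the standard commutative Ore step in $A_\star[S^{-1}]$.

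Granted the lemma, surjectivity reduces to showing $[s^{-1}x] \in \operatorname{Im}(m)$ for $s\in S, x\in M_k(A)$. A short computation using Kapranov's identity plus Lemma \ref{lem:jump2} gives $[s^{-1}x] = \tfrac{1}{s}\star[x] - \tfrac{1}{2s^2}\star[[s,x]]$ in $N_k(A[S^{-1}])$, where one uses that $zs^{-2} \equiv s^{-2}z$ modulo $M_{k+1}$ when $z = [s,x]$, since $[s,[s,x]]\in M_{k+1}$. Both terms on the right are visibly in $\operatorname{Im}(m)$, as $[s,x]\in M_k(A)$.

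For injectivity, an arbitrary element of the tensor product reduces by the commutative Ore step to the form $\tfrac{1}{s}\otimes[y]$. If this maps to zero, so that $\tfrac{1}{2}(s^{-1}y+ys^{-1}) \in M_{k+1}(A[S^{-1}])$, then sandwiching with $s$ yields $s\star y \in M_{k+1}(A[S^{-1}])\cap A$. Iterating the denominator-clearing lemma against this element at successive levels $k+1, k+2, \ldots$, and invoking $M_N(A[S^{-1}])=0$, one produces $t\in S$ with $t(s\star y) \in M_{k+1}(A)$; a parallel right-sided iteration plus a careful choice of products absorbs the correction $\tfrac{1}{2}[t,y]s$, yielding some $u\in S$ with $u\star y \in M_{k+1}(A)$. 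Then $\tfrac{1}{s}\otimes[y] = \tfrac{1}{su}\otimes[u\star y] = 0$. The main obstacle will be executing the denominator-clearing lemma cleanly for nested commutators with several independent denominators, and, in injectivity, organizing the left- and right-sided absorption so that the resulting element is a genuine $\star$-annihilator of $y$ in $N_k(A)$ rather than merely a one-sided annihilator; NC-nilpotence of $A[S^{-1}]$ is what forces all of these inductions to terminate.
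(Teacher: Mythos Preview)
Your overall architecture---prove a denominator-clearing statement $M_k(A[S^{-1}]) \equiv S^{-1}M_k(A)$ modulo $M_{k+1}(A[S^{-1}])$, then deduce the theorem---matches the paper's (this is exactly Corollary~\ref{cor:tech-result}). Your surjectivity step, once the lemma is in hand, is also fine: the identity $[s^{-1}x] = \tfrac{1}{s}\star[x] - \tfrac{1}{2s^2}\star[[s,x]]$ is correct.

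The gap is in the denominator-clearing lemma itself. You assert that after Leibniz and Kapranov, ``correction terms terminate quickly and land in $M_{k+1}(A[S^{-1}])$.'' This is true of the \emph{tail} of Kapranov's series (from $s^{-3}[s,[s,x]]$ on, by Lemma~\ref{lem:jump2}), but not of the second term $s^{-2}[s,x]$. When you pull a single denominator through a nested commutator, these second terms accumulate into products of simple brackets. Already for $[a_1,[a_2,s^{-1}b]]$ you obtain, among the ``main'' terms, the contribution $s^{-2}[s,a_1][a_2,b]$ (and its companions). Such a product lies only in $S^{-1}M_2M_2$, and $M_2M_2\not\subset M_3$ in general: under the Feigin--Shoikhet map, $[x_1,x_2][x_3,x_4]\mapsto 4\,dx_1\wedge dx_2\wedge dx_3\wedge dx_4\neq 0$. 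So these terms do \emph{not} individually lie in $S^{-1}M_3 + M_4$; what is true is that their \emph{sum} does, because of a nontrivial cancellation. The paper supplies precisely this cancellation as Lemma~\ref{lem:FSprod}:
\[
[b,d][a,m] + [a,d][b,m] \in M_{k+2}\quad\text{for } m\in M_k,
\]
whose proof uses the Feigin--Shoikhet map, and then packages the full computation as Theorem~\ref{thm:invcomm}: for $k$ odd, $[A,[A,S^{-1}M_k]]\subset S^{-1}M_{k+2} + M_{k+3}$. This ``two slots at a time'' extraction, together with the parity-dependent residual step in Corollary~\ref{cor:tech-result}, is the missing idea in your sketch; Lemma~\ref{lem:jump2} alone does not do the job.

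Separately, your injectivity argument is both harder and shakier than necessary. Once $M_k(A[S^{-1}]) = A[S^{-1}]\otimes_A M_k(A)$ is known, the paper simply tensors the short exact sequence $0\to M_{k+1}\to M_k\to N_k\to 0$ with the \emph{flat} $A$-module $A[S^{-1}]$ (flatness is part of Proposition~\ref{prop:oreloc}), obtaining injectivity for free. Your hands-on plan---produce $u\in S$ with $u\star y\in M_{k+1}(A)$ by iterated left/right clearing---may be salvageable, but the step ``absorbs the correction $\tfrac12[t,y]s$'' is exactly where a product-of-commutators term reappears, and you would again need something like Lemma~\ref{lem:FSprod} to control it. Using flatness avoids the issue entirely.
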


\begin{rem}
It follows from \cite{EKM}, Theorem 2.1, that $N_k(A)$ is finitely generated over $A/M_3$ in the case that $A$ is a free algebra of finite rank, and thus whenever $A$ is finitely generated.  Thus each $N_k(A)$ is a coherent sheaf over $X_\star$.
\end{rem}
\begin{rem}
We note that there is no loss of generality in assuming $A$ is NC-nilpotent: we have $N_k(A) \cong N_k(A/M_{k'})$ for $k' > k$; this allows us to apply Theorem \ref{thm:ncloc} to an arbitrary finitely generated algebra by first quotienting by sufficiently large $M_{k'}$.
\end{rem}

The remainder of this section is devoted to proving Theorem \ref{thm:ncloc}.  We will make repeated use of the following computation:

\begin{lem}\label{lem:inv-forms}Let $f \in A$ invertible.  Then we have:
  $[f\inv, g] = -f^{-2}[f, g] \mod M_3(A)$.
\end{lem}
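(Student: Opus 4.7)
The plan is to derive the formula from the identity $ff^{-1}=1$ using the Leibniz rule, and then control the error term using the fact that $M_3$ is a two-sided ideal containing $[A, M_2]$.

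First I would apply the Leibniz rule to $[g, ff^{-1}] = [g,1] = 0$ to obtain
\[ [g,f]f^{-1} + f[g, f^{-1}] = 0, \]
and rearrange, multiplying on the left by $f^{-1}$, to get the exact identity
\[ [g, f^{-1}] = -f^{-1}[g,f]f^{-1}. \]
This identity already holds in $A$, with no quotienting; the rest of the argument is just simplification modulo $M_3$.

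Next I would commute the factor $[g,f]$ past the adjacent $f^{-1}$. The error in doing so is $[[g,f], f^{-1}]$, which is a commutator of an element of $L_2$ with an element of $A$, hence lies in $L_3 \subset M_3$. Using that $M_3$ is a two-sided ideal (so $f^{-1}M_3 \subset M_3$), this gives
\[ f^{-1}[g,f]f^{-1} \equiv f^{-2}[g,f] \pmod{M_3}, \]
and therefore $[g, f^{-1}] \equiv -f^{-2}[g,f] \pmod{M_3}$. Finally, flipping both commutators via $[g,f^{-1}] = -[f^{-1}, g]$ and $[g,f] = -[f,g]$ yields the stated formula $[f^{-1}, g] \equiv -f^{-2}[f, g] \pmod{M_3}$.

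No step is really an obstacle here; the only subtle point is keeping track of which side of $f^{-1}$ the bracket sits on, and verifying that the commutator $[[g,f], f^{-1}]$ lies in $M_3$ — which is immediate from the definition $L_3 = [A, L_2]$ together with the fact that $L_3 \subset M_3$. The proof is essentially a one-line calculation once these bookkeeping remarks are in place.
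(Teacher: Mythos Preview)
Your proof is correct and follows essentially the same approach as the paper: both start from the vanishing of a commutator with $1=ff^{-1}$ (or $f^{-1}f$), apply the Leibniz rule, and then identify the single error term $f^{-1}[[\,\cdot\,,\,\cdot\,],\,\cdot\,]\in M_3$. The only differences are cosmetic (which factor is commuted past which, and the order of multiplying by $f^{-1}$ versus expanding).
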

\begin{proof}
  We compute:
  \begin{align*}
    0 &= f^{-1}[f\inv f, g] \\
    &= f^{-1}(f\inv[f, g] + [f\inv, g]f) \\
    &= f^{-1}(f\inv[f, g] + f[f\inv, g] + [ [f\inv, g], f]) \\
    &= f^{-2}[f, g] + [f\inv, g] + f\inv[ [f\inv, g], f].
  \end{align*}
\end{proof}
\begin{rem}
In the Feigin-Shoikhet isomorphism $A/M_3\cong \Omega^{ev}(\C^n)$, a commutator $[f,g]$ is sent to the two-form $2df\wedge dg$.  We note that Lemma \ref{lem:inv-forms} is compatible with the Feigin-Shoikhet isomorphism, via the identity $d(\frac{1}{f})=\frac{-1}{f^2}df$.
\end{rem}

For elements $a_1, \ldots, a_l \in A$, we denote by $M_k(a_1, \ldots, a_l)$ the image of $M_k(A_l)$ under the homomorphism $A_l \to A$, given by $x_i \mapsto a_i$.

\begin{lem}
  For $m \in M_k$, the expression 
  \begin{equation}
  [b, d][a, m] + [a, d][b, m]
  \end{equation}
  lies in $M_{k+2}$.
  \label{lem:FSprod}
\end{lem}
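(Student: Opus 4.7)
The plan is to reduce $X := [b,d][a,m] + [a,d][b,m]$ modulo $M_{k+2}$ to the symmetric $\star$-combination $[b,d]\star[a,m] + [a,d]\star[b,m]$ of Proposition~\ref{prop:astar-assoc}, and then verify its vanishing in $N_{k+1}$ by reducing to the free algebra.

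The key preliminary input is $[L_2, M_k] \subset M_{k+2}$. For $[x,y] \in L_2$ and a typical element $u\ell \in M_k$ with $\ell \in L_k$, Leibniz gives
\[
[[x,y], u\ell] = [[x,y], u]\ell + u[[x,y], \ell].
\]
The first summand lies in $L_3 \cdot L_k \subset M_3 M_k \subset M_{k+2}$ by Lemma~\ref{lem:M2}, and the second in $A \cdot [L_2, L_k] \subset A \cdot L_{k+2} \subset M_{k+2}$, using the standard $[L_2, L_k] \subset L_{k+2}$ (an immediate consequence of Jacobi). Applying this to the Leibniz expansion $[[b,d], am] = [[b,d], a]m + a[[b,d], m]$ and its analogues for $[[b,d], ma]$, $[[a,d], bm]$, $[[a,d], mb]$ shows each bracket lies in $M_{k+2}$; equivalently, $[b,d]$ commutes modulo $M_{k+2}$ with each of $am, ma$, and $[a,d]$ with each of $bm, mb$.

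Reordering each summand in the expansion $X = [b,d]am - [b,d]ma + [a,d]bm - [a,d]mb$ using these congruences yields $X \equiv [a,m][b,d] + [b,m][a,d] \pmod{M_{k+2}}$. Averaging this with the original form of $X$ gives
\[
X \equiv [b,d] \star [a,m] + [a,d] \star [b,m] \pmod{M_{k+2}},
\]
with $\star$ the commutative product on $A_\star = A/M_3$. It remains to verify the vanishing of this $\star$-combination in $N_{k+1}$; by functoriality of the filtration, this universal identity reduces to a check in the free algebra $A_n$, where the Feigin--Shoikhet picture of Example~\ref{star-ex} makes the vanishing natural via the 3-form antisymmetrization $db \wedge dd \wedge da + da \wedge dd \wedge db = 0$, wedged with a form representative of $m$.

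The main obstacle is the final verification: the reduction to the $\star$-form is quick once $[L_2, M_k] \subset M_{k+2}$ is in hand, but proving $[b,d] \star [a,m] + [a,d] \star [b,m] = 0$ in $N_{k+1}(A_n)$ requires either extending the Feigin--Shoikhet identification beyond the scope of Example~\ref{star-ex}, or an iteration of Leibniz/Jacobi expansions to reduce to a purely combinatorial identity that one checks directly.
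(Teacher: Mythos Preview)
Your reduction to the symmetrized form is correct but ultimately extraneous, and the final step you flag as ``the main obstacle'' is indeed a genuine gap that your proposal does not close.

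The issue is this: once you invoke $m\in M_k$ in steps 1--3, the remaining claim ``$[b,d]\star[a,m]+[a,d]\star[b,m]=0$ in $N_{k+1}$'' is \emph{not} a universal identity in a free algebra. If you pass to the free algebra on symbols $a,b,d,m$ by functoriality, then $m$ becomes an ordinary generator and the target is no longer $N_{k+1}$ but $N_2$; there is no ``form representative of $m$'' living in degree $k$ that you can wedge against, because the Feigin--Shoikhet isomorphism of Example~\ref{star-ex} only describes $A/M_3$, not $N_{k+1}$. So the heuristic $db\wedge dd\wedge da + da\wedge dd\wedge db = 0$ is suggestive but does not, as stated, prove anything about $N_{k+1}$.

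The paper's proof uses exactly this Feigin--Shoikhet antisymmetry, but applies it \emph{before} remembering that $m\in M_k$: in the free algebra $A_4$ on the four symbols $a,b,d,m$, the original expression $X$ maps to the four-form $4(db\wedge dd\wedge da\wedge dm + da\wedge dd\wedge db\wedge dm)=0$, so $X\in M_3(A_4)$. Since $X$ is multilinear of degree four, it is a linear combination of the finitely many degree-four spanning elements of $M_3(A_4)$, namely $m[a,[b,d]]$, $a[b,[d,m]]$, $[a,[b,dm]]$, $[ab,[d,m]]$, $[a,[bd,m]]$ and their permutations in $a,b,d$. Now substitute back into $A$ with $m\in M_k$: the first type lands in $M_kM_3\subset M_{k+2}$ (Lemma~\ref{lem:M2}, up to a commutator in $L_{k+3}$), and each of the remaining types lies in $[A,[A,M_k]]\subset M_{k+2}$ by Lemma~\ref{lem:jump2}. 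This second half---checking that every generator of $M_3(a,b,d,m)$ lands in $M_{k+2}$ once $m\in M_k$---is the step your argument is missing, and your preliminary symmetrization does nothing to avoid it.
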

\begin{proof}
The expression maps to zero under the Feigin-Shoikhet map, so it lies in $M_3(a, b, d, m)$.  The space $M_3(a, b, d, m)$ is spanned by the elements:
$$m[a,[b,d]], a[b,[d,m]], [a,[b,dm]], [ab,[d,m]], [a,[bd,m]],$$ together with their permutations in the symbols $a, b, d$.  We have $m[a, [b, d]]\in M_kM_3 \subset M_{k+2}$, by Lemma \ref{lem:M2} while the remaining elements lie in $M_{k+2}$, by Lemma \ref{lem:jump2}.  
\end{proof}

\begin{thm}\label{thm:invcomm}
  For $k$ odd, we have: 
$$    [ A, [A, S\inv M_k(A)]] \subset S\inv M_{k+2}(A) + M_{k+3}(A[S^{-1}]).$$
\end{thm}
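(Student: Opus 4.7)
The plan is to take $a, b \in A$, $s \in S$, and $m \in M_k(A)$ with $k$ odd, expand the double bracket via Leibniz as
\[ [a, [b, s^{-1} m]] = \underbrace{[a, [b, s^{-1}]]\, m}_{\text{(A)}} + \underbrace{[b, s^{-1}][a, m] + [a, s^{-1}][b, m]}_{\text{(B)}} + \underbrace{s^{-1}[a, [b, m]]}_{\text{(C)}}, \]
and show that each piece lies in $S^{-1} M_{k+2}(A) + M_{k+3}(A[S^{-1}])$. Two preliminary sharpenings will do most of the work. First, combining Lemmas \ref{lem:M2} and \ref{lem:FSprod} yields the stronger containment $[A, [A, M_k]] \subset M_{k+2}$ for all $k$: in the expansion of $[a, [b, cd]]$ with $d \in L_k$, the diagonal term lies in $L_3 \cdot M_k \subset M_{k+2}$ by Lemma \ref{lem:M2}, the trailing term in $AL_{k+2}$ by definition, and the crossed pair $[b, c][a, d] + [a, c][b, d]$ in $M_{k+2}$ by Lemma \ref{lem:FSprod}. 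Second, when $k$ is odd, Lemma \ref{lem:M2} gives $M_k M_2 \subset M_{k+1}$, from which $[A, M_k] \subset M_{k+1}$ follows by expanding $[a, xl]$ for $l \in L_k$ and writing $[a,x]l = l[a,x] + [[a,x],l] \in M_k M_2 + L_{k+2}$.

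With these in hand, Term (C) is immediate: the sharpened bound gives $[a, [b, m]] \in M_{k+2}(A)$, so $s^{-1}[a, [b, m]] \in s^{-1} M_{k+2}(A) \subset S^{-1}M_{k+2}(A)$. For Term (B), I would apply Lemma \ref{lem:inv-forms} to each factor, writing $[x, s^{-1}] = s^{-2}[s, x] + r_x$ with $r_x \in M_3(A[S^{-1}])$. The leading contribution $s^{-2}\bigl([s, b][a, m] + [s, a][b, m]\bigr)$ lies in $s^{-2} M_{k+2}(A) \subset S^{-1} M_{k+2}(A)$ by Lemma \ref{lem:FSprod} applied in $A$ with $d = s$, while each error $r_x [y, m]$ lies in $M_3(A[S^{-1}]) \cdot M_{k+1}(A[S^{-1}]) \subset M_{k+3}(A[S^{-1}])$ by Lemma \ref{lem:M2}, invoking $[A, M_k] \subset M_{k+1}$ for $k$ odd.

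Term (A) is the crux. Two applications of Lemma \ref{lem:inv-forms}, together with the derivation-like identity $[a, s^{-2}] \equiv 2 s^{-3}[s, a] \pmod{M_3(A[S^{-1}])}$, give
\[ [a, [b, s^{-1}]] \equiv 2 s^{-3}[s, a][s, b] + s^{-2}[a, [s, b]] \pmod{M_4(A[S^{-1}])}. \]
The summand $s^{-2}[a, [s, b]]\, m$ lies in $s^{-2} L_3 \cdot M_k \subset S^{-1} M_{k+2}(A)$ by Lemma \ref{lem:M2}, and $2s^{-3}[s,a][s,b]\,m$ is controlled by the crucial observation that $2[s, a][s, b] \in M_3(A)$, obtained from Lemma \ref{lem:FSprod} with the substitutions $a_L = b_L = s$, $d_L = a$, $m_L = b$; this places the summand in $s^{-3} M_{k+2}(A) \subset S^{-1} M_{k+2}(A)$. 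The $M_4(A[S^{-1}])$ errors (of the form $[a, r_b]\, m$ or $e_1[s, b]\, m$ with $e_1 \in M_3(A[S^{-1}])$) are tamed via rewritings like $[a, r_b]\, m = [a, r_b m] - r_b[a, m]$: since $r_b m \in M_3 M_k \subset M_{k+2}$ and $k+2$ is odd, $[a, r_b m] \in [A, M_{k+2}] \subset M_{k+3}$, while $r_b[a, m] \in M_3 \cdot M_{k+1} \subset M_{k+3}$. The main obstacle is precisely this Term (A) bookkeeping: the identity $[s, a][s, b] \in M_3$ is essential and falls out only via a non-obvious substitution in Lemma \ref{lem:FSprod}, and the odd-parity hypothesis enters repeatedly through the facts $[A, M_j] \subset M_{j+1}$ and $M_j M_2 \subset M_{j+1}$ for odd $j$.
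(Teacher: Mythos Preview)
Your proof is correct and uses the same core ingredients as the paper's: Leibniz expansion, Lemma~\ref{lem:inv-forms} to rewrite brackets with $s^{-1}$, and reduction to Lemmas~\ref{lem:M2} and~\ref{lem:FSprod}. The organization differs slightly. You expand fully into (A)+(B)+(C) up front and, in Term~(A), separate $m$ from the double bracket before expanding; this produces the factor $[s,a][s,b]$, which you then place in $M_3$ via the $k=1$ instance of Lemma~\ref{lem:FSprod}. The paper instead keeps $m$ attached throughout the expansion of $[a,[b,d^{-1}]m]$ and arrives directly at $d^{-2}\bigl([b,d][a,m]+[a,d][b,m]\bigr)$, to which Lemma~\ref{lem:FSprod} applies with the given $k$; so the paper never needs the $[s,a][s,b]\in M_3$ observation. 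Your two preliminary sharpenings, $[A,[A,M_k]]\subset M_{k+2}$ and $[A,M_k]\subset M_{k+1}$ for odd $k$, are also used in the paper's argument, the latter via a citation to Theorem~1.3 of~\cite{BJ}. One harmless slip: in your derivation of the second preliminary, the bracket $[[a,x],l]$ lies only in $L_{k+1}$, not $L_{k+2}$, but since $L_{k+1}\subset M_{k+1}$ the conclusion is unaffected.
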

\begin{proof}
Let $T= S^{-1}M_{k+2}(A) + M_{k+3}(A[S^{-1}]).$  We need to show that, for $a,b\in A, d\in S$, and $m\in M_k(A)$, we have $[a,[b,d\inv m]]=0\mod T.$  The proof consists of moving $d^{-1}$ out of the nested commutator, modulo terms in $M_{k+3}(A[S^{-1}])$.  We compute: 
\begin{align}
   [a, [b, d\inv m]] &= [a, [b, d\inv] m] + [a, d\inv[b, m]]\label{first-step}\\
   &= -[a, d^{-2}[b, d]m] + [a,d^{-1}[d^{-1},[b,d]]m] + [a, d\inv][b, m] + d^{-1}[a,[b,m]]\label{second-step}\\
   &= -[a, d^{-2}[b, d]m] + [a, d\inv][b, m] + d^{-1}[a,[b,m]] \mod L_{k+3}(A[S^{-1}])\label{third-step}
\end{align}
In passing from equation \eqref{first-step} to \eqref{second-step}, we have manipulated the first term as in the proof of Lemma \ref{lem:inv-forms}, and applied Jacobi identity to the second term.  Because $k$ is odd, Theorem 1.3 of \cite{BJ} implies that the second term in \eqref{second-step} lies in $[A,M_3M_k]\subset [A,M_{k+2}] \subset L_{k+3}(A[S^{-1}])$, so it is zero modulo $T$.  The third summand in equation \eqref{third-step} lies in $T$, so may be ignored.  Expanding the first summand of equation \eqref{third-step} with the Leibniz rule, and applying Lemma \ref{lem:inv-forms} to the second, we obtain:
\begin{align}
[a,[b,d\inv m]] &= -d^{-2}[a, [b, d]m] - [a, d^{-2}][b, d]m - d^{-2}[a, d][b, m]\mod T.\label{fourth-step}
\end{align}
Applying Lemma \ref{lem:inv-forms} to the second RHS term of \eqref{fourth-step} gives:
$$2d^{-3}[a, d][b, d]m \in S^{-1}M_3(A)M_k(A)\subset S^{-1}M_{k+2}(A)\subset T,$$
as in the proof of Lemma \ref{lem:FSprod}.  Thus we have: 
\begin{align*}
  [a, [b, d\inv m]] &= -d^{-2}\left([a, [b, d]m] + [a, d][b, m]\right) \mod T\\
  &= -d^{-2}\left([b, d][a, m]  + [a, d][b, m]\right) -  d^{-2}[a, [b, d]]m \mod T,
\end{align*}
by the Jacobi identity.  By Lemma \ref{lem:FSprod} the first expression lies in $S^{-1}M_{k+2}(A)\subset T$; by Lemma \ref{lem:M2}, the second expression does as well.  
\end{proof}

\begin{cor}\label{cor:tech-result}
We have:
\begin{enumerate}
\item $M_k(A[S^{-1}]) = A[S^{-1}]\otimes_A M_k(A) + M_{k+1}(A[S^{-1}])$.
\item $M_k(A[S^{-1}]) = A[S^{-1}] \otimes_A M_k(A)$.
\end{enumerate}
\end{cor}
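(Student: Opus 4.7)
The plan is to deduce (2) from (1), and to prove (1) by descending induction on $k$ using Theorem \ref{thm:invcomm} as the main technical engine.

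\emph{Deducing (2) from (1).} Substitute the formula from (1) for $M_{k+1}(A[S^{-1}])$ into itself and use $M_{k+1}(A) \subseteq M_k(A)$ to merge the two $A[S^{-1}]\otimes_A$ terms. Iterating yields
\[ M_k(A[S^{-1}]) = A[S^{-1}]\otimes_A M_k(A) + M_{k+j}(A[S^{-1}]) \]
for every $j\ge 1$. By Proposition \ref{prop:oreloc}(3), $A[S^{-1}]$ is NC-nilpotent, so the remainder vanishes once $k+j$ exceeds the nilpotence bound, giving (2). The tensor product is well-defined because the Ore localization $A\to A[S^{-1}]$ is flat.

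\emph{Proving (1).} The inclusion $\supseteq$ is immediate from $M_k(A)\subseteq M_k(A[S^{-1}])$ together with $M_{k+1}(A[S^{-1}])\subseteq M_k(A[S^{-1}])$. For $\subseteq$, I induct downward on $k$; the base case $k\ge N$ (with $M_N(A[S^{-1}])=0$) is trivial. Since $M_k(A[S^{-1}])$ is generated as a two-sided ideal of $A[S^{-1}]$ by the iterated brackets in $L_k(A[S^{-1}])$, it suffices to show that every $\ell = [b_1,[b_2,\ldots,[b_{k-1},b_k]\ldots]]$ with $b_i\in A[S^{-1}]$ lies in the right-hand side. Using the Ore condition, write $b_i = s^{-1}a_i$ with a common denominator $s\in S$ and $a_i\in A$. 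Repeatedly applying the Leibniz rule together with the exact identity
\[ [s^{-1}a,x] \;=\; s^{-1}[a,x] \;-\; s^{-1}[s,x]s^{-1}a \]
lets us peel every factor of $s^{-1}$ outside the nested bracket, producing a principal term $s^{-k}[a_1,[a_2,\ldots,[a_{k-1},a_k]\ldots]]$ (which sits manifestly in $A[S^{-1}]\otimes_A M_k(A)$) plus a sum of correction terms, each of which replaces one or more of the inner $[a_i,\cdot]$ slots with an $[s,\cdot]$ slot.

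\emph{Main obstacle.} The heart of the proof is showing that each correction term lies in $M_{k+1}(A[S^{-1}])$. This is precisely the role of Theorem \ref{thm:invcomm}: every correction contains at least one substituted $[s,\cdot]$ slot, which after grouping with its outer $[a,\cdot]$ partner forms a nested double commutator of the form $[A,[A,\,s^{-1}M_j(A)]]$ for appropriate $j\ge k-1$, and Theorem \ref{thm:invcomm} bounds such double commutators inside $S^{-1}M_{j+2}(A) + M_{j+3}(A[S^{-1}])$. Invoking Lemma \ref{lem:FSprod} to absorb the symmetric $L_2\cdot L_2$ contributions that arise from the Leibniz expansion of $[s^{-1}\cdot s^{-1}\cdots]$, every correction is placed in $M_{k+1}(A[S^{-1}])$. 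The inductive hypothesis at level $k+1$ then expresses this remainder as $A[S^{-1}]\otimes_A M_{k+1}(A) + M_{k+2}(A[S^{-1}])$, which is absorbed into $A[S^{-1}]\otimes_A M_k(A) + M_{k+1}(A[S^{-1}])$ via $M_{k+1}(A)\subseteq M_k(A)$, closing the induction.
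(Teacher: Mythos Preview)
Your deduction of (2) from (1) is correct and matches the paper's argument.

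For (1), however, there is a real gap. Your Leibniz expansion produces correction terms of the same bracket depth $k$ as the principal term, merely with some $a_i$ replaced by $s$ and with extra factors $s^{-1}a_i$ multiplied on the outside. Such terms lie in $M_k(A[S^{-1}])$, not a priori in $M_{k+1}(A[S^{-1}])$. Your claim that each correction can be organised into the shape $[A,[A,s^{-1}M_j(A)]]$ with $j \ge k-1$ is not established: for instance, the very first correction, coming from expanding the \emph{outermost} slot, is $-s^{-1}[s,m]\,s^{-1}a_1$ with $m\in L_{k-1}(A[S^{-1}])$, and there is no outer $[a,\cdot]$ partner to group with; this is simply an element of $M_k$. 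Even where such a grouping is possible, Theorem \ref{thm:invcomm} carries an essential parity hypothesis (it is stated only for odd index), and you never verify that the $j$ you feed into it is odd. The appeal to Lemma \ref{lem:FSprod} is similarly vague: that lemma controls one specific symmetric combination $[b,d][a,m]+[a,d][b,m]$ and does not obviously absorb the terms arising from your expansion.

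The paper's argument is organised differently and sidesteps these issues. Rather than a common denominator, it inducts on the number of inversions appearing among the entries $l_i$: a single $f^{-1}$ is moved by the Jacobi identity to the innermost slot $l_k$, and then Theorem \ref{thm:invcomm} is applied to pull $f^{-1}$ out of the nested bracket \emph{two slots at a time}; since the inner block to which the theorem is applied always has odd length, the parity hypothesis is met at every step. When $k$ is even one residual bracket remains after this process, and that last slot is handled directly via Lemma \ref{lem:inv-forms} together with the containment $M_2M_{k-1}\subset M_k$. This two-slots-at-a-time mechanism, and the separate treatment of the final even step, is precisely the idea your sketch is missing.
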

\begin{proof}
Assertion (2) follows by repeatedly applying (1), and using that $A[S\inv]$ is NC-nilpotent. For (1), we consider an arbitrary generator of $M_k(A[S^{-1}])$,
$$v=a[l_1,[l_2,[\ldots [l_{k-1},l_k]\cdots ],$$
 where the $l_i$ are some monomials in the generators of $A[S^{-1}]$. If each $l_i$ lies in $A$, there is nothing to prove; thus, we suppose that $l_i=\tilde{l_i}f^{-1}$, for some $\tilde{l_i}\in A$, $f\in S$, and $i\in I$.  By repeated use of Jacobi identity, we may assume that $i=k$.

By repeatedly applying Theorem \ref{thm:invcomm}, we may pull the $f^{-1}$ out of the nested commutator two slots at a time, until we are left with expressions of the form:

$$af^{-1}[l'_1,[l'_2,\ldots[l'_{k-1},l'_k]\cdots], \textrm{ if $k$ is odd, or}$$
$$a[l'_1,f^{-1}[l'_2,\ldots[l'_{k-1},l'_k]\cdots], \textrm{ if $k$ is even,}$$
where we have ignored any additional terms produced which lie in $M_{k+1}(A[S\inv])$.
In the first case, there is one less inversion in the iterated commutator expression, so by induction $v$ lies in $A[S^{-1}]\otimes_A M_k(A) + M_{k+1}(A[S^{-1}])$.  In the second case, we compute, using the Jacobi identity:
{\small\begin{align*}
&a[l'_1,f^{-1}[l'_2,\ldots[l'_{k-1},l'_k]\cdots]\\ &= af^{-1}[l'_1,[l'_2,\ldots[l'_{k-1},l_k]\cdots] + a[l_1',f^{-1}][l'_2,\ldots[l'_{k-1},l'_k]\cdots]\\
&= af^{-1}[l'_1,[l'_2,\ldots[l'_{k-1},l_k]\cdots] -af^{-2}[l_1',f][l'_2,\ldots[l'_{k-1},l'_k]\cdots] \mod M_{k+1}(A[S\inv]).
\end{align*}}
The second summand above lies in $M_2M_{k-1}\subset M_k$, and both summands feature one less inversion in the nested commutator, and so by induction lie in $A[S^{-1}]\otimes_A M_k(A) + M_{k+1}(A[S^{-1}])$. \end{proof}

\begin{proof}[Proof of Theorem \ref{thm:ncloc}]
  Having proved Corollary \ref{cor:tech-result}, the proof echoes Theorem 2.1.6 of \cite{K}.  Namely, we have an exact sequence:
\begin{align}
0 \to M_{k+1} \to M_k \to N_k \to 0.
\end{align}
By Proposition \ref{prop:oreloc}, $A[S^{-1}]$ is a flat $A$-module; applying $A[S\inv]\otimes_A -$ we obtain: 
\begin{align}
0 \to A[S\inv] \otimes_A M_{k+1} \to A[S\inv] \otimes_A M_k \to A[S\inv] \otimes_A N_k \to 0.
\end{align}
Combining this with Corollary \ref{cor:tech-result}, we obtain 
\begin{align}
A[S\inv] \otimes_A N_k(A) &\cong \frac{A[S\inv] \otimes_A M_k}{A[S\inv] \otimes_A M_{k+1}} \cong \frac{M_k(A[S\inv])}{M_{k+1}(A[S\inv])} \cong N_k(A[S\inv]).
\end{align}
\end{proof}

\begin{cor}\label{cor:ncloc} The completion $N_k(A)_{(\mathfrak{m}_x)}$ at any smooth point $x\in X^{sm}$ is a quotient of $N_k(\widehat{A}_n)$, where $n=dim X$.\end{cor}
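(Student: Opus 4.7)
The plan is to reduce to a local computation via Theorem \ref{thm:ncloc}, then exhibit a surjection from $\widehat{A}_n$ onto the completion of $A$ at the smooth point by lifting a regular system of parameters and inducting on the filtration $M_\bullet$.

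First I would apply Theorem \ref{thm:ncloc} with $\bar{S}=(A/M_2)\setminus\mathfrak{m}_x$ to replace $A$ by its Ore localization, so that $A/M_2$ becomes local at $\mathfrak{m}$ and $N_k(A)_{(\mathfrak{m}_x)} \cong \widehat{(A_\star)}_\mathfrak{m}\otimes_{A_\star} N_k(A)$ (using that $N_k(A)$ is finitely generated over $A_\star$). Smoothness of $x$ gives $\widehat{A/M_2}_\mathfrak{m}\cong \C[[y_1,\ldots,y_n]]$ for $n=\dim X$; lifting a regular system of parameters to $\tilde{y}_i\in A$ defines $\phi:A_n\to A$ via $x_i\mapsto\tilde{y}_i$, which extends on $(\mathfrak{m}+M_2)$-adic completions to $\widehat\phi:\widehat{A}_n\to\widehat{A}$.

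I would then show $\widehat\phi$ is surjective by induction on $M_\bullet$, which terminates by NC-nilpotence. The $k=2$ base case is the surjection $\C[[x_1,\ldots,x_n]]\twoheadrightarrow\C[[y_1,\ldots,y_n]]$, an isomorphism by the parameter choice. Given surjectivity modulo $M_k$, any generator $a[b_1,[b_2,\ldots,b_k]\cdots]$ of $M_k(\widehat{A})$ lifts to $M_k(\widehat{A}_n)$ modulo $M_{k+1}(\widehat{A})$ by choosing arbitrary preimages of $a$ and the $b_i$: the discrepancy expands into a sum of iterated commutators each having at least one entry in $M_k(\widehat{A})$, and every such term lies in $M_{k+1}(\widehat{A})$ by Lemma \ref{lem:jump2} combined with Lemma \ref{lem:M2}.

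The surjectivity of $\widehat\phi$ descends to $N_k(\widehat{A}_n)\twoheadrightarrow N_k(\widehat{A})$, so it remains to identify $N_k(\widehat{A})$ with $N_k(A)_{(\mathfrak{m}_x)}$. Applying the exact functor $\widehat{(A_\star)}_\mathfrak{m}\otimes_{A_\star}-$ to the sequence $0\to M_{k+1}\to M_k\to N_k\to 0$ and replicating the endgame of the proof of Theorem \ref{thm:ncloc} — with $\widehat{(A_\star)}_\mathfrak{m}$ in place of $A_\star[S^{-1}]$ — gives this isomorphism. I expect this last identification to be the main technical step, requiring a completion-analogue of Corollary \ref{cor:tech-result}, though the identities in Lemma \ref{lem:inv-forms} and Theorem \ref{thm:invcomm} are purely algebraic and should transfer to the complete setting with only notational changes.
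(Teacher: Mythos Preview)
The paper states this corollary without proof, treating it as an immediate consequence of Theorem~\ref{thm:ncloc}; your proposal is the natural way to unpack that implication and is essentially correct in outline. You also rightly flag the passage from Ore localization to $\mathfrak{m}_x$-adic completion (a completion analogue of Corollary~\ref{cor:tech-result}) as the one genuinely new ingredient beyond what Theorem~\ref{thm:ncloc} literally provides.

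One step in your surjectivity induction needs more care. When the $M_k$-perturbation lands in the \emph{outermost} slot $b_1$, or in the scalar $a$, the error term lies in $[M_k, L_{k-1}]$ or in $M_k L_k$, and Lemmas~\ref{lem:M2} and~\ref{lem:jump2} do not by themselves force these into $M_{k+1}$ when $k$ is even. Already for $k=2$ this fails: $[x_1,x_2][x_3,x_4]\in L_2L_2$ has nonzero image in $N_2(A_4)$, so $L_2L_2\not\subset M_3$, and the same example shows $[M_2,A]\not\subset M_3$. The clean repair is to note that every such error term picks up an extra $M_2$ factor and therefore moves strictly deeper in Kapranov's NC-filtration $F_\bullet$; iterating and invoking Claim~\ref{claim:top} (cofinality of $F_\bullet$ and $M_\bullet$ for finitely generated algebras) terminates the process inside $M_{k+1}$. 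Equivalently, run the surjectivity induction along $F_\bullet$ from the outset: since $F_\bullet$ is multiplicative and $A$ is NC-nilpotent, a map that is surjective modulo $M_2$ is automatically surjective.
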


\begin{cor}\label{cor:onedim}
Suppose that $X=Spec(A/M_2)$ is a zero or one dimensional scheme with finitely many non-reduced points.  Then each $N_k(A)$ is a finite dimensional vector space, for $k\geq 2$.
\end{cor}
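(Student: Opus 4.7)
The plan is to view $\nks$ as a coherent sheaf on $X_\star$ and show that its support is zero-dimensional; finite dimensionality then follows from standard commutative algebra. The coherence is exactly the remark following Theorem \ref{thm:ncloc}, and the closed immersion $X \into X_\star$ is a nilpotent thickening, so the two share the same underlying topological space; it thus suffices to control the stalks of $\nks$ point by point on $X$.

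In the zero-dimensional case there is nothing further to prove: $X$ has finite underlying set, so the support of $\nks$ is automatically finite, and a finitely generated module over a Noetherian commutative $\C$-algebra with zero-dimensional support is finite dimensional over $\C$.

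In the one-dimensional case, let $Z \subset X$ denote the union of the non-reduced locus and the singular locus of $X_\mathrm{red}$. The former is finite by hypothesis and the latter is automatically finite for a reduced one-dimensional finite-type $\C$-scheme, so $Z$ is finite and $U := X \setminus Z$ is a smooth affine curve. For each closed point $x \in U$, Corollary \ref{cor:ncloc} applies with $n = \dim X = 1$ and exhibits the completion $N_k(A)_{(\mathfrak{m}_x)}$ as a quotient of $N_k(\widehat{A}_1)$; since $\widehat{A}_1 = \C[[t]]$ is commutative, $N_k(\widehat{A}_1) = 0$ for $k \geq 2$, so this completion vanishes. Faithful flatness of completion for finitely generated modules over the Noetherian local ring $(A_\star)_{\mathfrak{m}_x}$ then forces the localization, and hence the stalk of $\nks$ at $x$, to vanish as well. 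Therefore $\mathrm{Supp}\,\nks \subset Z$ is finite, and we conclude as in the zero-dimensional case.

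The only substantive step is the vanishing of stalks on the smooth one-dimensional locus, where Corollary \ref{cor:ncloc} reduces the question to the observation that the free algebra on a single generator is commutative. The rest — coherence of $\nks$, the identification of $X$ and $X_\star$ as topological spaces, and the passage from zero-dimensional support to finite dimensionality — is formal commutative algebra.
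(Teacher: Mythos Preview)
Your proof is correct and follows essentially the same route as the paper's: regard $N_k(A)$ as a coherent sheaf on $X_\star$ via Theorem \ref{thm:ncloc}, invoke Corollary \ref{cor:ncloc} together with the commutativity of $\widehat{A}_1$ to see that the stalks vanish at smooth reduced points, and conclude that the support is finite. You spell out a few details the paper leaves implicit (the identification of $|X|$ with $|X_\star|$, faithful flatness of completion, finiteness of the singular locus), but the argument is the same.
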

\begin{proof}
By Theorem \ref{thm:ncloc}, each $N_k(A)$ forms a coherent sheaf on $X_\star$.  Over any smooth, reduced point of $X$, the fiber is isomorphic to $N_k(A_n)=0$, for $n = 0, 1$; hence the sheaf is supported only over the singular and non-reduced points of $X$, where it is of finite rank.
\end{proof}

\begin{rem}
The assumption that there are finitely many non-reduced points is necessary. For example, let $A = k\langle x, y \rangle /(y^2)$. Then the set $\{[x^l,y],l\geq 1\}$ is linearly independent in $N_2$.  See \cite{CF} for an elaboration of this example, and related examples.
\end{rem}

\begin{cor}\label{cor:syzygy}
Suppose $A$ is the quotient of a free algebra on generators $x_1,\ldots, x_n$ in degrees $d_1,\ldots d_n$, by a collection of homogeneous non-commutative polynomials, so that $A$ is graded by degree.  Then the Hilbert series for each $N_k(A)$ is a rational function, with poles at $d_i$th roots of unity.\end{cor}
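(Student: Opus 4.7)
The plan is to exploit the $\star$-nilpotence of the ideal $J := M_2/M_3 \subseteq A_\star$ to install a finite filtration on $N_k(A)$ by graded submodules whose successive quotients are finitely generated modules over $A_{ab}$, then apply the weighted Hilbert-Serre theorem.

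First, the homogeneity of the relations makes each $L_k$ and $M_k$ homogeneous in $A$, so $N_k(A)$, $A_\star$, and the $\star$-action are all $\mathbb{Z}$-graded. The key structural observation is that the ideal $J \subseteq A_\star$ is $\star$-nilpotent. In the free case $A = A_n$, the Feigin-Shoikhet isomorphism $A_n/M_3 \cong \Omega^{ev}(\mathbb{A}^n)$ identifies $\star$ with the ordinary wedge product (Example \ref{star-ex}) and identifies $J$ with $\bigoplus_{p \geq 1} \Omega^{2p}$; hence $J^{\star p} \subseteq \Omega^{\geq 2p}$, which vanishes once $2p > n$. For general finitely generated graded $A$, a presentation as a quotient of some $A_n$ yields a surjection of graded $\star$-algebras $(A_n)_\star \twoheadrightarrow A_\star$ mapping $J_{A_n}$ onto $J_A$, so the nilpotency bound descends.

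By the remark following Theorem \ref{thm:ncloc} (citing \cite{EKM}), $N_k(A)$ is finitely generated as a graded $A_\star$-module. Hence the chain
$$N_k(A) \supseteq J \star N_k(A) \supseteq J^{\star 2} \star N_k(A) \supseteq \cdots \supseteq J^{\star N} \star N_k(A) = 0$$
is a finite filtration by graded $A_\star$-submodules. Each successive quotient is annihilated by $J$ and is therefore a finitely generated graded module over $A_\star/J \cong A_{ab}$, which itself is a graded quotient of the weighted polynomial ring $\mathbb{C}[y_1, \ldots, y_n]$ with $|y_i| = d_i$. The weighted Hilbert-Serre theorem gives each subquotient's Hilbert series in the form $P_i(t)/\prod_j (1-t^{d_j})^{e_{i,j}}$; additivity of Hilbert series over the filtration then expresses $H_{N_k(A)}(t)$ as a sum of such rational functions, so its denominator divides a power of $\prod_j (1-t^{d_j})$ and its poles lie only among the $d_j$-th roots of unity.

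The principal obstacle is establishing nilpotence of $J$ in the general case; once this descent from the free case via Feigin-Shoikhet is in hand, the rest is a routine Hilbert-Serre argument applied to the filtration's subquotients.
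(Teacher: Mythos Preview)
Your argument is correct. Both your proof and the paper's reduce to the weighted Hilbert--Serre theorem for finitely generated graded modules over $\mathbb{C}[x_1,\ldots,x_n]$ with $|x_i|=d_i$; the paper simply cites Hilbert's syzygy theorem in one line, leaving implicit the passage from $A_\star$ down to the polynomial ring, whereas you supply it explicitly via the nilpotence of $J=M_2/M_3$ and the resulting finite $J$-adic filtration. Your route and the paper's implicit route rest on the same input (the Feigin--Shoikhet identification $(A_n)_\star\cong\Omega^{ev}(\mathbb{A}^n)$): you use it to see $J^{\star p}=0$ for $2p>n$, while a direct reading of the paper's proof would use it to see that $(A_n)_\star$, and hence $A_\star$, is a finite module over the polynomial subring generated by the $x_i$, so that $N_k(A)$ is already finitely generated over that polynomial ring without introducing a filtration. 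The two arguments are equivalent in content; yours has the virtue of making visible exactly why the poles are constrained to the $d_i$th roots of unity rather than to roots of unity of degrees arising from auxiliary generators of $A_\star$.
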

\begin{proof}
This follows immediately from Hilbert's syzygy theorem for graded modules over polynomial rings.
\end{proof}

\section{Formal geometry and locally free algebras}
\label{sec:NkFormal}
Throughout this section, we assume $A$ is a finitely generated, locally free algebra of rank $n$. We begin with a review of the basic constructions in formal geometry, and then give an alternative construction of $N_k(A)$ in terms of the formal geometry on $X$. For background, we refer to \cite{BK}, Section 3, whose conventions we follow.
\subsection{Harish-Chandra torsors}
\label{sec:HCT-review}
Let $G$ be a pro-algebraic group, $G = \varprojlim G^k$.  Let $\g=Lie(G)$ denote the Lie algebra of $G$, and let $\rho:\mathcal{M}\to X$ be a $G$ torsor, that is, a scheme $\mathcal{M}$ over $X$, equipped with a $G$ action commuting with $\rho$, and inducing an isomorphism $G\times \mathcal{M}\cong \mathcal{M}\times_X\mathcal{M}$.

\begin{defn}
We say a $G$-module $N$ is a \emph{pro-finite} if we have $N = \varprojlim N^k$ of finite dimensional representations $N^k$ of the algebraic groups $G^k$.
\end{defn}
Throughout this section, we take all $G$-modules to be pro-finite.

\begin{defn} Given a $G$-module $N$, the \emph{associated vector bundle} $N_\mathcal{M}$ on $X$ is: 
\begin{equation}
N_\mathcal{M} := \mathcal{M} \times N / \left\{ (m \cdot g, v) \sim (m, g \cdot v) \right\}.
  \label{eq:assocbundle}
\end{equation}
\end{defn}

\begin{rem}  By construction, the fiber of $N_\mathcal{M}$ over $x\in X$ is non-canonically isomorphic to $N$, and we have an isomorphism,
\begin{equation}\Gamma(X,N_\mathcal{M})\cong \Gamma(\mathcal{M},\mathcal{M} \times N)^{G}.\label{inv-sections}\end{equation}
\end{rem}

We have an associated $G$-equivariant exact sequence of sheaves on $\mathcal{M}$:
\begin{equation}
  0 \to \mathcal{T}_{\mathcal{M}/X} \to \mathcal{T}_\mathcal{M} \to \rho^* \mathcal{T}_X \to 0
\end{equation}
which, by descent gives us an exact sequence of sheaves on $X$ (known as the \emph{Atiyah extension})
\begin{equation}
  0 \to \g_{\mathcal{M}} \xrightarrow{i} \mathcal{E}_\mathcal{M} \xrightarrow{j} \mathcal{T}_X \to 0.
  \label{eq:atiyah}
\end{equation}
Here $\g_\mathcal{M}$ denotes the bundle associated to the $G$-module $\g$, as explained above.  Recall that a connection on $\mathcal{M}$ is a splitting $\theta_\mathcal{M}:\mathcal{E}_\mathcal{M} \to \g_\mathcal{M}$.  We will need to consider a more general notion of connection, taking values in a larger Lie algebra than $\g$.

\begin{defn} A \emph{Harish-Chandra pair $\GH$} is a pro-algebraic group $G$, together with a Lie algebra $\h$ with $G$-action, and a compatible inclusion $\iota:\g \into \h$.
\end{defn}

\begin{defn} A connection $\theta_\mathcal{M}$ for $\GH$ is a map $\theta_\mathcal{M}:\mathcal{E}_\mathcal{M} \to \h_\mathcal{M}$ such that $\iota=\theta_\mathcal{M}\circ i$.  The pair $(\mathcal{M},\theta_\mathcal{M})$ is called a $\GH$-torsor with connection.
\end{defn}

\begin{defn}
For $G$ connected, we say that an $\h$-module $N$ is a $\GH$-\emph{module} if the induced action of $\g$ is integrated to an algebraic representation of $G$.
\end{defn}

Given a $\GH$-torsor with connection $(\mathcal{M}, \theta_{\mathcal{M}})$ a $\GH$-module $N$, the vector bundle $N_\mathcal{M}$ carries a connection $\nabla$, called the Harish-Chandra connection, given by the formula, for $s\in \Gamma(U,N_\mathcal{M})$ and $\xi\in\Gamma(U,\mathcal{T}_X)$:
$$\nabla_\xi(s) := \widetilde{\xi}\cdot s - \theta_\mathcal{M}(\widetilde{\xi})\cdot s,$$
where $\widetilde{\xi}$ is any lift of $\xi$ to $\mathcal{E}_\mathcal{M}$; in fact $\nabla_\xi$ is independent of the choice of lift.

\begin{rem}
Given a pair $\GH$, one can define a formal group $H$ which contains $G$ and has Lie algebra $\mathfrak{h}$ such that $H/G$ is the formal neighborhood of zero in $\mathfrak{h}/\mathfrak{g}$. Then there is an $H$-torsor associated to any $G$-torsor, and splitting an analogue of (\ref{eq:atiyah}) for this $H$-torsor is equivalent to a connection on a $\GH$-torsor as defined above.
\end{rem}

\subsection{Formal geometry}
\label{sec:FGReview}
The fundamental example of a Harish-Chandra torsor we will use is the \emph{bundle of formal coordinate systems}, whose construction we now recall.

Let $\Dn$ denote the formal disc, $\Dn=spf(\C[[x_1, \dots, x_n]])$; denote the unique maximal ideal by $\mathfrak{m}_0$.  We have the pro-algebraic group $Aut(\Dn) = \varprojlim Aut^k(\Dn)$, the inverse limit over $k$ of the automorphism groups $Aut^k(\Dn)$ of $k$-th order infinitesimal neighborhoods of the origin in $\mathbb{A}^n$.

Let $W_n$ denote the Lie algebra of vector fields on $\Dn$, and let $\hW_n^0$ denote the subalgebra of vector fields vanishing at the closed point and more generally let $W_n^k$ denote vector fields vanishing to order $k+1$ at the closed point.  In other words, $W_n$ is the Lie algebra of derivations of $\C[[x_1,\ldots,x_n]]$, $W_n^0$ is the subalgebra preserving the augmentation ideal, and $W_n^k$ is the subalgebra $\mathfrak{m}_0^{k+1}W_n$, under the natural $\C[[x_1,\ldots, x_n]]$-module structure on $W_n$.  We have an identification $W_n^0=Lie(Aut(\Dn))$.

The following proposition is well known.

\begin{prop}\label{fdimrep}
A finite dimensional $W_n^0$-module $N$ integrates to an $Aut(\Dn)$-module if, and only if, the Euler operator $E=\sum x_i\partial_i$ acts diagonalizably with integer eigenvalues.  Moreover, for such $N$ there exists $k$ such that the $Aut(\Dn)$-action on $N$ factors through $Aut^k(\Dn)$.
\end{prop}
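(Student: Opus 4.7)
The plan is to prove both implications through the interplay between the pro-algebraic group $Aut(\Dn)$, its one-parameter subgroup of scalar dilations, and the weight grading of $W_n^0$ induced by $\ad E$. The moreover clause will fall out of the sufficiency argument for free.

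For the necessity direction, I would observe that the homomorphism $\mathbb{G}_m \to Aut(\Dn)$ sending $t$ to the scalar dilation $x_i \mapsto tx_i$ has differential sending the standard generator of $\lie(\mathbb{G}_m)$ to $E$. Any pro-finite algebraic representation of $Aut(\Dn)$ restricts to a rational representation of $\mathbb{G}_m$, which automatically decomposes into integer weight spaces; thus $E$ acts diagonalizably with integer eigenvalues.

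For the converse, the first computation is $[E, x^\alpha \partial_j] = (|\alpha|-1)\, x^\alpha \partial_j$, from which it follows that $W_n^k = \mathfrak{m}_0^{k+1} W_n$ is topologically spanned by $E$-eigenvectors of weight at least $k$. A weight-$m$ element of $W_n^0$ sends an $E$-weight-$\lambda$ vector in $N$ to a weight $\lambda+m$ vector, so if $N$ has $E$-eigenvalues confined to an interval $[\lambda_{\min},\lambda_{\max}]$ and $k > \lambda_{\max}-\lambda_{\min}$, then $W_n^k$ annihilates $N$. This simultaneously proves the moreover clause and reduces the problem to integrating a representation of the finite-dimensional Lie algebra $W_n^0/W_n^k$.

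Finally I would integrate this finite-dimensional action via the Levi-type decomposition $W_n^0/W_n^k \cong \gl_n \ltimes (W_n^1/W_n^k)$, where $\gl_n$ corresponds to linear vector fields $\sum a_{ij} x_j \partial_i$. The ideal $W_n^1/W_n^k$ has strictly positive $E$-weights, hence acts by nilpotent endomorphisms and integrates via the exponential to an action of the unipotent algebraic group $U^k := \exp(W_n^1/W_n^k)$. For the $\gl_n$ factor, the $\mathfrak{sl}_n$ summand integrates automatically to $SL_n$ (which is simply connected as an algebraic group in characteristic zero), and the central summand $\C \cdot E$ integrates to $\mathbb{G}_m$ precisely under our hypothesis that $E$ acts diagonalizably with integer eigenvalues. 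Compatibility of these two integrated actions under the semidirect product structure yields a rational representation of $Aut^k(\Dn) = GL_n \ltimes U^k$, which extends along the projection $Aut(\Dn) \onto Aut^k(\Dn)$ to give the required $Aut(\Dn)$-action. The step I expect to be most delicate is precisely the $\gl_n$-to-$GL_n$ integration: the rest of the argument is essentially formal once one has the weight-shifting calculation, but the integrality and diagonalizability of $E$ are used in an essential way at the central $\mathbb{G}_m$, making the dilation-subgroup observation from the necessity argument the true heart of the equivalence.
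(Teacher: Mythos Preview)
The paper does not actually prove this proposition; it is simply asserted as ``well known'' immediately before the statement. Your outline is the standard argument, and both the necessity direction (via restriction to the dilation $\mathbb{G}_m$) and the reduction to a representation of the finite-dimensional quotient $W_n^0/W_n^k$ via the $E$-weight-shift calculation are correct and cleanly explained.

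There is, however, a genuine gap in the final integration step, and it reflects an imprecision in the proposition as the paper states it. To pass from compatible actions of $SL_n$ and $\mathbb{G}_m$ to an action of $GL_n$ one must verify that the kernel of the surjection $SL_n \times \mathbb{G}_m \onto GL_n$, $(A,t)\mapsto tA$, namely $\mu_n$, acts trivially; the hypothesis on $E$ alone does not guarantee this. Concretely, for $n\geq 2$ let $\gl_n \subset W_n^0$ act on a one-dimensional space by the character $A \mapsto \tfrac{1}{n}\operatorname{tr}(A)$ and let $W_n^1$ act by zero. Then $E$ acts diagonalizably by the integer $1$, yet this module does not integrate to $GL_n \subset \aut(\Dn)$, since the only one-dimensional algebraic $GL_n$-representations have differential $m\cdot\operatorname{tr}$ with $m\in\Z$. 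The hypothesis that makes your argument (and the proposition) correct is that \emph{each} diagonal operator $x_i\partial_i$ act diagonalizably with integer eigenvalues, i.e.\ that the $\gl_n$-weights on $N$ be integral; under that hypothesis the $\gl_n$-action integrates directly to $GL_n$ and the rest of your proof goes through unchanged. For $n=1$ the two hypotheses coincide, which may be the source of the folklore formulation.
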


Using the above proposition it is easy to confirm that all $W_n$-modules encountered in this section give rise to $\langle Aut(\Dn), \hW_n \rangle$-modules.
\begin{example}
Examples of pro-finite representations of $Aut(\Dn)$ coming from geometry include power series rings $M = k[[x_1, \dots, x_n]]$, and 
more generally, the completion of a natural vector bundle at a smooth point on a variety of dimension $n$ (see Section \ref{sec-nvb}).
\end{example}

\begin{defn} The \emph{bundle of formal coordinate systems} $\mathcal{M}_X$ on a smooth scheme $X$ of dimension $n$ consists of pairs $(x \in X, t_x: X_{(x)} \stackrel{\sim}{\to} \Dn)$.  Projection to the first factor makes $\mathcal{M}_X$ a $Aut(\Dn)$-torsor over $X$. 
\end{defn}

\begin{rem}This definition is incomplete, in that it doesn't define $\mathcal{M}_X$ as a scheme: in fact, $\mathcal{M}_X$ has the structure of a scheme of infinite type over $X$, and is locally trivial as a $Aut(\Dn)$-torsor in the Zariski topology on $X$.  We refer the reader to \cite{FBZ}, Sections 9.4.4 and 11.3.3, and \cite{BK}, Section 3.1, for complete definitions regarding the scheme structure on $\mathcal{M}_X$.\end{rem}

The action of $\hW_n$ on $\C[[x_1,\dots,x_n]]$ defines a map,
$$\tilde{a}:\hW_n\times\mathcal{M}_X\to T_{\mathcal{M}_X/X},$$
descending to an isomorphism,
$$a: (\hW_n)_\mathcal{M} \xrightarrow{\sim} \mathcal{E}_\mathcal{M}.$$
\begin{defn} The \emph{bundle of flat formal coordinate systems} on $X$ is the $\langle Aut(\Dn), \hW_n \rangle$-torsor, ($\mathcal{M}_X,\theta_{\mathcal{M}_X}=a^{-1}$).
\end{defn}

\begin{rem}\label{connection-one-form}Given a $\langle Aut(\Dn),\hW_n \rangle$-module $N$, and $U\subset X$ over which $N_{\mathcal{M}_X}$ trivializes, the Harish-Chandra connection $N_{\mathcal{M}_X}|_U=N_{\mathcal{M}_U}$ is given by:
$$ \nabla_\xi(s) = ds(\xi) - \xi\circ s.$$
Thus a section $s \in \Gamma(X,N_{\mathcal{M}_X})$ is flat if the derivative, $\xi(s)$, of $s$ along the base is simply the action of $\xi$ in the fiber.\end{rem}

\begin{defn} \label{def:glob} The globalization $\mathcal{GL}_X(N)$ to X of a $\langle Aut(\Dn),\hW_n\rangle$-module $N$ is the sheaf of flat sections of the associated bundle $(N_{\mathcal{M}_X},\nabla)$.
\end{defn}

\subsection{Natural vector bundles}\label{sec-nvb}
A natural vector bundle is essentially a functorial assignment of a vector bundle $\V(X)\to X$, to every smooth $X$.  More precisely, we have:

\begin{defn}
The category $Sm_n$ has as objects smooth, finite-type formal schemes, of dimension $n$, and as morphisms \'etale maps.
\end{defn}

\begin{rem} Thus, the category $Sm_n$ is just general enough to encompass both ordinary smooth algebraic varieties, as well as formal neighborhoods of smooth points.\end{rem} 

\begin{defn}
The category $VB_n$, of vector bundles with $n$-dimensional base, has as objects pairs $(X,V)$ of a smooth, finite-type formal scheme $X$ of dimension $n$, and a vector bundle $V\to X$ over $X$.  A morphism from $(X,V)$ to $(Y,W)$ is an \'etale map $f:X\to Y$, together with an isomorphism $\phi:V\to f^*(W)$.
\end{defn}

We have a forgetful functor $F:VB_n\to Sm_n$, forgetting the vector bundle.

\begin{defn}\label{nvb-def} A natural vector bundle $\V$ on $n$-dimensional schemes is a functorial splitting of $F$, i.e. a functor $G:Sm_n\to VB_n$, such that $G\circ F=\operatorname{id}_{Sm_n}$.  A morphism of natural vector bundles is a natural transformation of the corresponding functors.  This defines the category, $\mathcal{NVB}_n$, of natural vector bundles.
\end{defn}

Semi-simple examples of natural vector bundles include: trivial vector bundles, tangent and cotangent bundles, differential forms, polyvector fields, and more generally any tensor bundle built from tangent and cotangent bundles by Schur functors.  Non-semi-simple examples include bundles of polynomial differential operators of order $\leq k$, for any $k$.

\begin{defn}
Let $i_\mathbf{0}: \{0\} \into \Dn$ denote the inclusion of the origin into $\Dn$.  The \emph{standard fiber} $St(\V)$ of $\V\in\NVB_n$ is $St(\V)=i_\mathbf{0}^*(\V(\Dn))$. 
\end{defn}

Applying Definition \ref{nvb-def}, each automorphism $g:\Dn\to\Dn$ induces an isomorphism $\phi_g:\V(\Dn)\to g^*\V(\Dn)$.  Since $g$ preserves the closed point, the assignment $ g\mapsto \phi_g$ defines an action of $Aut(\Dn)$ on $St(\V)$.  Thus we have the \emph{standard fiber} functor:
$$St: \NVB_n\to Aut(\Dn)\textrm{-mod}_f,$$
to the category of finite dimensional algebraic $Aut(\Dn)$-modules.

Conversely, given a finite dimensional algebraic $Aut(\Dn)$-module $N$, the assignment to each smooth scheme $X$ of its associated bundle $N_\mathcal{M_X}$ determines a natural vector bundle, $Assoc(N)$.  
\begin{prop}
\label{prop:AssocSt}
The functors $St$ and $Assoc$ are mutually inverse equivalences of categories.
\end{prop}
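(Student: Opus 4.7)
The plan is to exhibit $St$ and $Assoc$ as quasi-inverses by explicitly constructing the unit and counit natural isomorphisms, leveraging the tautological geometry of the formal coordinate torsor $\mathcal{M}_X$.

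For the composition $St \circ Assoc$, fix $N \in Aut(\Dn)\textrm{-mod}_f$. The key observation is that $\mathcal{M}_{\Dn}$ carries a tautological basepoint in the fiber over the origin, namely the identity coordinate $\operatorname{id}:\Dn\xrightarrow{\sim}\Dn$. Under the equivalence relation defining the associated bundle (\ref{eq:assocbundle}), this point trivializes the fiber $N_{\mathcal{M}_{\Dn}}|_{\mathbf{0}}$ canonically as $N$, yielding an isomorphism $St(Assoc(N)) = i_\mathbf{0}^* N_{\mathcal{M}_{\Dn}} \xrightarrow{\sim} N$. The $Aut(\Dn)$-action on the basepoint reproduces the given action on $N$, so the identification is $Aut(\Dn)$-equivariant, and the construction is manifestly natural in $N$.

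For $Assoc \circ St \cong \operatorname{id}$, we construct for each $\V\in\NVB_n$ and each $X\in Sm_n$ a natural bundle isomorphism $\Phi_X:St(\V)_{\mathcal{M}_X}\xrightarrow{\sim}\V(X)$. Given a point $m_x=(x,t_x)\in\mathcal{M}_X$, the formal isomorphism $t_x:X_{(x)}\xrightarrow{\sim}\Dn$ is a morphism in $Sm_n$, so functoriality of $\V$ yields an isomorphism $\V(t_x):\V(X_{(x)})\xrightarrow{\sim} t_x^*\V(\Dn)$. Passing to fibers over $x$, and identifying $\V(X_{(x)})|_x$ with $\V(X)|_x$ via any \'etale coordinate neighborhood of $x$, one obtains $\Phi_{m_x}:St(\V)\xrightarrow{\sim}\V(X)|_x$. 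The assignment $m_x\mapsto \Phi_{m_x}$ assembles to a bundle map $\widetilde\Phi_X:\mathcal{M}_X\times St(\V)\to \V(X)$; the effect of an $Aut(\Dn)$-action is to precompose the coordinate by the inverse group element, which by functoriality of $\V$ is absorbed into the standard-fiber action, proving $Aut(\Dn)$-equivariance in the convention of (\ref{eq:assocbundle}). Hence $\widetilde\Phi_X$ descends to $\Phi_X$, and being a fiberwise linear isomorphism between locally free sheaves of the same rank, $\Phi_X$ is an isomorphism of bundles. Naturality in $X$ follows because an \'etale map $f:X\to Y$ pulls back coordinate systems compatibly and $\V(f)$ intertwines the two identifications via functoriality of $\V$ on compositions.

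The main obstacle is justifying the identification $\V(X_{(x)})|_x\cong \V(X)|_x$ and, more fundamentally, the claim that a natural vector bundle is determined by its values on formal discs together with the $Aut(\Dn)$-action on its standard fiber. This is the descent content of Gelfand--Kazhdan formal geometry along the torsor $\mathcal{M}_X\to X$: the Zariski-local triviality of $\mathcal{M}_X$ recorded in the remark after the definition of $\mathcal{M}_X$ reduces the statement to ordinary descent of coherent sheaves along a smooth cover, at which point the constructions above produce the desired equivalence. Once this foundational point is granted, the verifications that $\Phi$ and the trivialization $St(Assoc(N))\cong N$ are natural isomorphisms of functors reduce to routine bookkeeping with the definitions.
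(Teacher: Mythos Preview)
Your proof is correct and follows essentially the same approach as the paper's: both directions are handled identically, identifying $St\circ Assoc(N)$ with $N$ via the fiber at the closed point, and building the map $Assoc(St(\V))(X)\to\V(X)$ from the observation that each $(x,\varphi)\in\mathcal{M}_X$ yields an isomorphism $St(\V)\xrightarrow{\sim}\V(X)|_x$ via functoriality of $\V$. You are somewhat more explicit than the paper about the one genuine subtlety---why $\V(X_{(x)})|_x$ may be identified with $\V(X)|_x$---which the paper simply asserts; your invocation of Zariski-local triviality of $\mathcal{M}_X$ and the \'etale nature of the completion map is the right justification.
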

\begin{proof}
Let $M \in Aut(D_n)-\text{mod}_f$. We have a natural isomorphism $M \stackrel{\sim}{\to} St \circ Assoc(M)$, which simply identifies $M$ with the fiber of the associated bundle over the closed point.

For $\V \in \NVB_n$, we now construct a natural isomorphism $\V \to Assoc(St(\V))$.  Set $\mathcal{G} = Assoc \circ St (\V)$. Given $X \in Sm_n$, the data of a map $\Psi: \mathcal{G}(X) \to \V(X)$ is equivalent to that of a $G$-equivariant map $\widetilde{\Psi}$ as shown:
    \[
\xymatrix{
\mathcal{M}_X \ar^\pi[d] & \pi^*\mathcal{G}(X) \ar[r]^{\widetilde{\Psi}}="a" \ar@{-->}[d] & \pi^*\V(X) \ar@{-->}[d] \\
X & \mathcal{G}(X) \ar[r]^\Psi="b" & \V(X)
\ar@{-->}@/^0pc/"a";"b"
}
    \]
with dashed arrows indicating the descent functor (i.e. taking $Aut(\Dn)$-invariants). Note that any point $(x, \varphi) \in \mathcal{M}_X$ gives a map $\varphi_x: St(\V) \stackrel{\sim}{\to} \V_x$, the fiber of $\V$ at $x \in X$. Now let $s$ be a section of $\pi^*\mathcal{G}(X)$ over $\mathcal{M}_X$. We define: 
    \[ \left(\widetilde\Psi(s)\right)(x, \varphi) := (x, \varphi_x(s(x))). \]
It is easy to see that $\widetilde{\Psi}$ is a $G$-equivariant isomorphism, hence it induces an isomorphism $\Psi$.
\end{proof}

Globalizations of certain $W_n$-equivariant $\mathcal{O}_{\Dn}$-modules provide an important source of natural vector bundles.  We outline the construction now.

\begin{defn}
Let $Vec(\Dn/W_n)$ denote the category of finite rank $\C[[x_1,\ldots, x_n]]$-modules with a compatible $\langle Aut(\Dn),W_n\rangle$-module structure. 
\end{defn}

\begin{rem}  Equivalently, and object of $Vec(\Dn/W_n)$ is a finite rank $\C[[x_1,\ldots, x_n]]$-module $M$ with compatible $W_n$-action, such that the Euler operator $E$ acts diagonalizably with integer eigenvalues on the associated graded module, $gr(M)=\oplus_k \mathfrak{m}_0^kM/\mathfrak{m}_0^{k+1}$.\end{rem}
We note that the globalization functor $\GL_X$ applied to the $\hW_n$-module $\C[[x_1,\ldots,x_n]]$ yields jets of the structure sheaf $\mathcal{O}_X$, compatibly with the action of $\mathcal{O}$ on any $M\in Vec(\Dn/W_n)$.  This endows the globalization $\GL_X(M)$ of $M\in\mathcal{W}^{int}_\mathcal{O}\textrm{-mod}$ with the structure of a vector bundle.  The naturality of the torsor $\mathcal{M}_X$ under \'etale morphisms gives a functor:

$$\GL: Vec(\Dn/W_n) \to \mathcal{NVB}_n.$$

Conversely, given a natural vector bundle $\mathcal{V}$, the evaluation $ev_{\Dn}: \mathcal{V}\mapsto\mathcal{V}(\Dn)$ is naturally an object of $Vec(\Dn/W_n)$.

\begin{prop}
The functors $ev_{\Dn}$ and $\GL$ define mutually inverse equivalences of categories.
\end{prop}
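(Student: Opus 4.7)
The plan is to parallel Proposition~\ref{prop:AssocSt}, lifting the $Aut(\Dn)$-equivalence between $\NVB_n$ and $Aut(\Dn)$-modules to one between $\NVB_n$ and $Vec(\Dn/W_n)$, with the additional $\C[[x_1,\ldots,x_n]]$-module and full $W_n$-module structures tracked throughout. First I would check that both functors are well-defined. For $\V \in \NVB_n$, the module $\V(\Dn)$ is a finite-rank $\C[[x_1,\ldots,x_n]]$-module by coherence, carries an $Aut(\Dn)$-action by naturality, and the $W_n^0$-action obtained by differentiating this extends to a full $W_n$-action by differentiating $\V$ along all formal vector fields (not only those vanishing at the origin); the compatibility between the $\mathcal{O}$- and $W_n$-actions required for membership in $Vec(\Dn/W_n)$ follows from the Leibniz rule. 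For $M \in Vec(\Dn/W_n)$, the globalization $\GL_X(M)$ is automatically functorial in \'etale maps of $X$ via the naturality of the torsor $\mathcal{M}_X$, placing it in $\NVB_n$.

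Next I would verify $ev_{\Dn} \circ \GL \cong \operatorname{id}$ on $Vec(\Dn/W_n)$. For $X = \Dn$, the torsor $\mathcal{M}_{\Dn}$ admits a tautological section assigning to each point $x \in \Dn$ the translation chart centered at $x$. Pulling back a flat section of $(M_{\mathcal{M}_{\Dn}},\nabla)$ along this section and using the formula $\nabla_\xi(s) = ds(\xi) - \xi \cdot s$ from Remark~\ref{connection-one-form} produces a natural bijection with $M$ that preserves the $\mathcal{O}$-, $Aut(\Dn)$-, and $W_n$-structures, because the flatness equation is precisely the Taylor-expansion identity for the $W_n$-action on $M$.

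Then I would verify $\GL \circ ev_{\Dn} \cong \operatorname{id}$ on $\NVB_n$. Setting $M = \V(\Dn)$, I would define $\widetilde{\Psi}: \pi^*\GL(M)(X) \to \pi^*\V(X)$ over $\mathcal{M}_X$ by the same recipe as in Proposition~\ref{prop:AssocSt}, namely $\widetilde{\Psi}(s)(x,\varphi) = \varphi_*(s|_{X_{(x)}})$, where $\varphi_*: \V(\Dn) \to \V(X_{(x)})$ is the isomorphism induced by the formal chart $\varphi$. This map is $Aut(\Dn)$-equivariant and flat for $\nabla$, because an infinitesimal change of $\varphi$ by a vector field $\xi$ alters the Taylor expansion of $s|_{X_{(x)}}$ in the coordinates $\varphi$ exactly by the $W_n$-action $\xi \cdot (-)$ on $M$. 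Descending yields the required natural isomorphism $\V \xrightarrow{\sim} \GL \circ ev_{\Dn}(\V)$.

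The main obstacle is this last flatness claim. Proposition~\ref{prop:AssocSt} only used the $Aut(\Dn)$-action, which integrates the pro-nilpotent subalgebra $W_n^0$ of vector fields vanishing at the origin; the new content here is that the action of vector fields with nonzero constant term, which do \emph{not} integrate to $Aut(\Dn)$, must match infinitesimal recentering of coordinate charts. This ultimately reduces to unraveling the canonical identification $a:(\hW_n)_\mathcal{M} \xrightarrow{\sim} \mathcal{E}_\mathcal{M}$ that defines the flat structure on $\mathcal{M}_X$: by construction, the image of $\xi \in \hW_n$ in $\mathcal{E}_\mathcal{M}$ is precisely the infinitesimal motion on $\mathcal{M}_X$ produced by pre-composing charts with the flow of $\xi$, which is exactly the operation appearing in the definition of $\widetilde\Psi$ above.
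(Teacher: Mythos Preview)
Your proposal is correct and follows essentially the same approach as the paper's proof. The paper is terser: for $\GL\circ ev_{\Dn}\cong\operatorname{id}$ it simply refers back to the construction in Proposition~\ref{prop:AssocSt} without spelling out the flatness verification you highlight as the ``main obstacle'', and for $ev_{\Dn}\circ\GL\cong\operatorname{id}$ it writes down the explicit Taylor-expansion formula $f_m(y)=e^{\sum_i y_i\partial_i}\cdot m=\sum_I \frac{1}{I!}y^I(\partial_I m)$ for the unique flat section with value $m$ at the origin, which is exactly your ``translation-chart'' description made concrete. Your more careful discussion of why the full $W_n$-action (and not just $W_n^0$) matches the connection is a useful elaboration of what the paper leaves implicit.
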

\begin{proof}
A natural isomorphism $\GL \circ ev_{\Dn} (\V) \to \V$ can be constructed as in the proof of \ref{prop:AssocSt}.  For $M\in Vec(\Dn/W_n)$-mod, we construct a natural isomorphism $M\to ev_{\Dn}\circ \GL(M)$, as follows.  Let $m \in M$. We have a canonical trivialization of the associated bundle $M_{\mathcal{M}_{\Dn}}$ over $\Dn$.  For $m\in M$, we define a section $f_m \in \Gamma(\Dn, \GL_{\Dn}(M)),$ by: 
\[ f_m(y_1, \dots, y_n) = e^{\sum_iy_i\partial_i} \cdot m = \sum_{I}\frac{1}{I!} y^I(\partial_Im). \]
On the right hand side, $\partial_i$ acts on $m\in M$ via the $W_n$-action, $I$ ranges over all multi-indices $I=(i_1,\ldots i_n)$ with each $i_k\geq 0$, $y^I=y^{i_1}\cdots y^{i_n}$, and $\partial_I = \partial_{i_1}\ldots\partial_{i_n}$.  In other words, $f_m$ is the unique flat section taking value $m$ at the closed point.  The correspondence $m\mapsto f_m$ is clearly injective and surjective, since a flat section on $\Dn$ is uniquely determined by its value at the origin. 

\end{proof}

In summary, we have the following categories, and functors between them:
\begin{equation}\label{eq:nvbun}
\xymatrix{Vec(\Dn/W_n) \ar^{\GL}_\sim@/^/[rr]  && \ar^{ev_{\Dn}}@/^/[ll] \mathcal{NVB}_n \ar^{St}_\sim@/^/[rr]  && \ar^{Assoc}@/^/[ll] Aut(\Dn)\text{-mod}_f 
}
\end{equation}

\begin{rem} \label{fdequiv}By Proposition \ref{fdimrep}, the equivalences outlined above reduce the problem of studying natural vector bundles to that of studying finite dimensional representations of the algebraic groups $Aut^k(\Dn)$.  These groups are unipotent extensions of the group $GL_n$; as such their finite dimensional irreducible representations are all pulled back from $GL_n$.
\end{rem}

\subsection{Lower central series of locally free algebras}
Having reviewed the basic setup for formal geometry and natural vector bundles, we now explain the relation to lower central series.
\begin{lem}
Let $\widehat{A}$ be a local ring, non-canonically isomorphic to $\widehat{A}_n$.  Then any isomorphism $\psi:\widehat{A}/M_2 \stackrel{\sim}{\to} \C[[x_1, \ldots, x_n]]$ induces a canonical isomorphism $\widetilde{\psi}:N_k(\widehat{A}) \stackrel{\sim}{\to} N_k(\widehat{A}_n)$.  Moreover, for $g\in Aut(\Dn)$, we have $\widetilde{g\circ \psi}=\widetilde{g}\circ\widetilde{\psi}$.\label{lem:homlift}
\end{lem}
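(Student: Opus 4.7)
The plan is to construct $\widetilde{\psi}$ by lifting $\psi$ to a ring isomorphism and showing the induced map on $N_k$ depends only on $\psi$. Since $\widehat{A}\cong\widehat{A}_n$ (non-canonically), I would first fix any such isomorphism $\Phi:\widehat{A}\to\widehat{A}_n$; its reduction $\bar{\Phi}:\widehat{A}/M_2\to\C[[x_1,\ldots,x_n]]$ differs from $\psi$ by an element $g:=\psi\circ\bar{\Phi}^{-1}\in\aut(\Dn)$. Using the universal property of the completed free algebra, I would lift $g$ to some $G\in\aut(\widehat{A}_n)$ by sending each generator $x_i$ to an arbitrary preimage of $g(x_i)$ under the surjection $\widehat{A}_n\onto\widehat{A}_n/M_2$, and set $\Psi:=G\circ\Phi$. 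Then $\Psi$ is an algebra isomorphism $\widehat{A}\to\widehat{A}_n$ whose reduction mod $M_2$ is $\psi$; since $\Psi$ preserves the $M_\bullet$-filtration it descends to an isomorphism $\Psi_*:N_k(\widehat{A})\xrightarrow{\sim}N_k(\widehat{A}_n)$, which I take as the definition of $\widetilde{\psi}$.

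The substance of the proof lies in verifying that $\widetilde{\psi}=\Psi_*$ is independent of the choice of lift. If $\Psi_1,\Psi_2$ both lift $\psi$, then $\alpha:=\Psi_2\circ\Psi_1^{-1}$ is an automorphism of $\widehat{A}_n$ inducing the identity on $\widehat{A}_n/M_2$, and it suffices to establish the key sublemma that every such $\alpha$ acts as the identity on each $N_k(\widehat{A}_n)$. I would prove this by writing $\alpha=\exp d$ for a continuous derivation $d$ with $d(\widehat{A}_n)\subset M_2$, and then showing by induction on $k$ that $d(M_k)\subset M_{k+1}$. The inductive step uses Leibniz to expand $d$ applied to a $k$-fold iterated commutator into a sum of $k$-fold commutators with exactly one argument replaced by an element of $M_2$; the extra commutator slot coming from $M_2=AL_2$ can then be absorbed into an increase of the filtration index using Lemma~\ref{lem:jump2} (which gives $[A,[A,M_k]]\subset M_{k+1}$) and Lemma~\ref{lem:M2} (which controls the residual product terms $M_iM_j\subset M_{i+j-1}$ when $i$ is odd).

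The ``moreover'' statement then drops out. If $\Psi$ is a lift of $\psi$ and $G\in\aut(\widehat{A}_n)$ is a lift of $g\in\aut(\Dn)$, then $G\circ\Psi$ is a lift of $g\circ\psi$, so by the independence established in the previous step $\widetilde{g\circ\psi}=(G\circ\Psi)_*=G_*\circ\Psi_*=\widetilde{g}\circ\widetilde{\psi}$, where $\widetilde{g}$ is by construction the map induced on $N_k(\widehat{A}_n)$ by any lift of $g$.

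The main obstacle is the sublemma. The available filtration identities do not immediately upgrade $[A,M_k]$ to $M_{k+1}$, so the argument must crucially exploit that the replaced entry lies in $M_2=AL_2$ rather than in $A$: the extra bracket provided by writing a generator of $M_2$ as $a\cdot[u,v]$ is what allows Jacobi plus Leibniz to convert the combinatorics of a $k$-fold iterated commutator containing one $M_2$ slot into a sum of $(k{+}1)$-fold iterated commutators and product terms absorbed by Lemma~\ref{lem:M2}. Careful bookkeeping, with an induction that tracks both the filtration degree of the image and the position of the $M_2$-entry in the nested bracket, is essential.
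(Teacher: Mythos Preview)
Your key sublemma is false: an automorphism $\alpha$ of $\widehat{A}_n$ that reduces to the identity modulo $M_2$ need \emph{not} act as the identity on $N_k$. Take $n=4$ and $\alpha(x_1)=x_1+[x_3,x_4]$, $\alpha(x_i)=x_i$ for $i\ge 2$. Then
\[
\alpha\bigl(x_1[x_1,x_2]\bigr)\;\equiv\; x_1[x_1,x_2]+[x_3,x_4][x_1,x_2]\pmod{M_3},
\]
and $[x_3,x_4][x_1,x_2]$ is nonzero in $N_2$: under the Feigin--Shoikhet isomorphism its image is $4\,dx_3\wedge dx_4\wedge dx_1\wedge dx_2\neq 0$ in $\Omega^{4}(\C^4)$. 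Equivalently, the derivation $d$ with $d(x_1)=[x_3,x_4]$ and $d(x_i)=0$ otherwise satisfies $d(\widehat{A}_4)\subset M_2$ but $d(M_2)\not\subset M_3$. The bookkeeping you outline cannot succeed here, because the available identities give no control over $M_2M_2$: Lemma~\ref{lem:M2} requires an odd index, and the ``extra bracket'' from writing an $M_2$-entry as $a[u,v]$ only produces terms in $L_2\cdot L_{k}+[A,[A,M_{k-1}]]$, which in general is no better than $M_k$. Consequently your $\widetilde{\psi}$ is not well-defined from $\psi$ alone.

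The paper closes this gap by lifting $\psi$ \emph{canonically} one step further, to an isomorphism $\overline{\Psi}:(\widehat{A}/M_3)_\star\xrightarrow{\sim}(\widehat{A}_n/M_3)_\star$, using the Feigin--Shoikhet identification $(\,\cdot\,/M_3)_\star\cong\Omega^{ev}(\,\cdot\,/M_2)$ and the functoriality of $\Omega^{ev}$ under isomorphisms. Two algebra lifts $\Psi,\Psi'$ of $\overline{\Psi}$ then satisfy $\Psi'(a)-\Psi(a)\in M_3$, and at this level the needed containments \emph{are} available: $M_3M_k\subset M_{k+2}$ (Lemma~\ref{lem:M2} with $i=3$) and $[A,M_3]\subset L_4$ (Theorem~1.3 of \cite{BJ}) force $\Psi$ and $\Psi'$ to agree on $N_k$. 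So the missing idea is that the correct invariant to lift through is $A/M_3$, not $A/M_2$, and the Feigin--Shoikhet isomorphism is exactly what provides a canonical lift of $\psi$ to that level.
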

\begin{proof}  We have the Feigin-Shoikhet natural isomorphisms $\xi:(\widehat{A}/M_3)_\star\cong \Omega^{ev}(\widehat{A}/M_2)$.  Recall that $\Omega^{ev}(-)$ is functorial w.r.t isomorphisms: an isomorphism $f: A_1\xrightarrow{\sim} A_2$ of commutative algebras induces an isomorphism $\Omega^{ev}(f):\Omega^{ev}(A_1)\xrightarrow{\sim}\Omega^{ev}(A_2)$.  Thus, given $\psi$, we have the natural isomorphism $\overline{\Psi}$, defined as the composition: 
  \[
    \xymatrix{
    (\widehat{A}/M_3)_\star \ar^\sim_\xi[r]\ar@{-->}[d]^{\overline{\Psi}}&\Omega^{ev}(\widehat{A}/M_2) \ar[d]^{\Omega^{ev}(\psi)}\\
    (\widehat{A}_n/M_3)_\star& \ar^{\xi^{-1}}_\sim[l] \Omega^{ev}(\widehat{A}_n/M_2)
    }
  \]
Let $\Psi:\widehat{A} \to \widehat{A}_n$ denote an arbitrary isomorphism lifting of $\overline{\Psi}$.  We claim that $\Psi|_{N_k(A)}$ is independent of the choice of lift.  Suppose that $\Psi'$ is another lift; then $\Psi'(a)-\Psi(a)\in M_3(\widehat{A}_n)$, for all $a\in \widehat{A}$.  The containments $M_3M_k\subset M_{k+2}$, and $[A,M_3]\subset L_4$ (Theorem 1.3 from \cite{BJ}) imply that $\Psi$ and $\Psi'$ agree modulo $M_{k+1}(\widehat{A}_n)$:
  \begin{align*}
    \Psi(a_0[a_1, [a_2, \dots, [a_{k-1}, a_k]]\dots) &=\Psi(a_0)[\Psi(a_1), \dots[\Psi(a_{k-1}), \Psi(a_k)]\dots]\\
						     &=\Psi'(a_0)[\Psi'(a_1), \dots[\Psi'(a_{k-1}), \Psi'(a_k)]\dots] \mod M_{k+1}.
\end{align*}
Thus, we may set $\widetilde{\psi}:=\Psi.$  The second part of the claim follows from the naturality in constructing $\overline{\Psi}$.
\end{proof}

Similarly, any $\chi\in \hW_n$ induces an endomorphism of $N_k(\widehat{A_n})$, compatible with the identification $W_n^0\cong Lie(Aut(\Dn))$.  As a consequence, each component $N_k(\widehat{A_n})$ is a module for the pair $(Aut(\Dn),\hW_n)$.  Thus, we have the sheaf $\GLNk$ on $X$.  The main result of this section is the following:
\begin{thm}
\label{thm:formloc}
We have $\nks(A) \cong \GLNk$ as vector bundles on $X$.  In particular, we have $N_k(A) \cong \Gamma(X, \GLNk)$, as $U(Vect(X))\ltimes \mathcal{O}_X$-modules.
\end{thm}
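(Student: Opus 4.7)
The plan is to construct an explicit isomorphism of sheaves $\Phi: \nks(A) \xrightarrow{\sim} \GLNk$ on $X$ by descending from the Harish-Chandra torsor $\MX$. The key input is Lemma \ref{lem:homlift}: any isomorphism $\widehat{A}_{(x)}/M_2 \cong \C[[x_1,\ldots,x_n]]$ yields a canonical identification $N_k(\widehat{A}_{(x)}) \cong N_k(\widehat{A}_n)$, equivariantly in the coordinate choice under $Aut(\Dn)$. Combined with local freeness of $A$ and Theorem \ref{thm:ncloc} (which gives $\nks(A)^{\wedge}_x \cong N_k(\widehat{A}_{(x)})$), this produces, for each point $(x, t_x) \in \MX$, a canonical isomorphism $\nks(A)^{\wedge}_x \cong N_k(\widehat{A}_n)$, varying $Aut(\Dn)$-equivariantly in $t_x$. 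By descent these assemble into a morphism of sheaves $\Phi: \nks(A) \to N_k(\widehat{A}_n)_{\MX}$ on $X$.

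Next, I would show $\Phi$ factors through the subsheaf $\GLNk \subset N_k(\widehat{A}_n)_{\MX}$ of flat sections and is an isomorphism onto it. By Remark \ref{connection-one-form}, flatness of a section $s$ over a local trivialization amounts to $ds(\xi) = \xi \circ s$ for every vector field $\xi$: the base-derivative of $s$ must match the fiberwise $W_n$-action of $\xi$. The sections in the image of $\Phi$ satisfy this because the Lemma \ref{lem:homlift} identification is natural not only under the group $Aut(\Dn)$ but also under infinitesimal coordinate changes in all of $W_n$, so varying the coordinate system by $\xi$ varies the identification precisely by the fiberwise action of $\xi$. To verify $\Phi$ is an isomorphism onto $\GLNk$, I would complete at each closed point $x\in X$: local freeness and Theorem \ref{thm:ncloc} give the completion of the left side as $N_k(\widehat{A}_n)$; a flat section of the associated bundle over a formal disc is determined by its value at the center (as in the proof of the $ev_{\Dn}/\GL$ equivalence), so the completion of the right side is also $N_k(\widehat{A}_n)$; and $\Phi$ is the identity in these canonical trivializations.

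The global-sections statement $N_k(A) \cong \Gamma(X, \GLNk)$ then follows by taking $\Gamma(X,-)$, since $\nks(A)$ realizes $N_k(A)$ as its global sections. The $U(Vect(X))\ltimes \mathcal{O}_X$-module structures on both sides agree by construction: $\mathcal{O}_X = A/M_2$ acts on both by multiplication, and global vector fields act on flat sections of $\GLNk$ by Lie derivative in a way that matches the commutator-derivation action on $N_k(A)$, because the actions are compatible with the Lemma \ref{lem:homlift} identifications at each point.

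The main obstacle is promoting Lemma \ref{lem:homlift} from an $Aut(\Dn)$-equivariance statement to a full $W_n$-naturality statement, i.e.\ verifying that the canonical lift $\widetilde{\psi}$ intertwines the derivation actions of $W_n$ on $N_k(\widehat{A}_{(x)})$ and on $N_k(\widehat{A}_n)$; equivalently, that the pullback of the Harish-Chandra connection along $\Phi$ coincides with the tautological trivial connection on $\pi^*\nks(A)$ over $\MX$. This should follow from the functoriality of $\Omega^{ev}$ under derivations of commutative algebras (i.e.\ an infinitesimal version of the diagram in the proof of Lemma \ref{lem:homlift}), but forms the technical heart of the argument.
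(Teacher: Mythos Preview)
Your proposal is correct and follows essentially the same route as the paper: define the map via Lemma~\ref{lem:homlift} composed with the completion isomorphism coming from Theorem~\ref{thm:ncloc}, verify $Aut(\Dn)$-equivariance to descend to a section of the associated bundle, check flatness, and conclude by comparing stalks/completions. The only difference is one of emphasis in the flatness step---the paper attributes it to compatibility of the isomorphism of Theorem~\ref{thm:ncloc} with derivations, whereas you locate the crux in promoting Lemma~\ref{lem:homlift} to full $W_n$-naturality---but these are two facets of the same verification.
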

Applying the construction of Proposition \ref{prop:astar-assoc} we may regard $N_k(\widehat{A}_n)$ as an object of $\mathcal{W}^{int}_{n,\mathcal{O}}$-mod.  Thus, combining Theorem \ref{thm:formloc} and the existence of Kapranov's locally free lifts $A(X)$ of any smooth $X$, we have:
\begin{cor}\label{cor:nvb} The assignment $X\mapsto N_k(A(X))$, determines a natural vector bundle.\end{cor}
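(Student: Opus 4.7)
The plan is to combine Theorem \ref{thm:formloc} with the chain of equivalences in diagram (\ref{eq:nvbun}). The key observation is that, although the locally free lift $A(X)$ is only well-defined up to non-canonical isomorphism, the right-hand side $\GLNk$ in Theorem \ref{thm:formloc} is canonically determined by $X$ alone (together with the fixed standard fiber $N_k(\widehat{A}_n)$). So the strategy is: verify that $N_k(\widehat{A}_n)$ is an object of $Vec(\Dn/W_n)$, apply the globalization functor to produce a natural vector bundle, and then invoke Theorem \ref{thm:formloc} to identify its value on each $X$ with $N_k(A(X))$.

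First, I would assemble the relevant structures on $N_k(\widehat{A}_n)$. Proposition \ref{prop:astar-assoc}, applied to $\widehat{A}_n$, endows $N_k(\widehat{A}_n)$ with an action of $(\widehat{A}_n/M_3)_\star$; restriction along the inclusion $\C[[x_1,\ldots,x_n]]=\widehat{A}_n/M_2\hookrightarrow (\widehat{A}_n/M_3)_\star$ supplies the required $\mathcal{O}_{\Dn}$-module structure. The $\langle Aut(\Dn),W_n\rangle$-module structure is furnished by Lemma \ref{lem:homlift} and the subsequent paragraph. Compatibility of these two actions is essentially automatic, since both arise from functorial constructions applied to the single algebra $\widehat{A}_n$. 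Finally, the standard grading on $\widehat{A}_n$ descends to a grading on $N_k(\widehat{A}_n)$ with finite-dimensional graded pieces, so the Euler operator acts diagonalizably with non-negative integer eigenvalues; by Proposition \ref{fdimrep}, the $W_n^0$-action integrates to a pro-finite $Aut(\Dn)$-representation.

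Second, I would apply the globalization functor $\GL: Vec(\Dn/W_n)\to\NVB_n$ of diagram (\ref{eq:nvbun}) to the object $N_k(\widehat{A}_n)$. The result is a natural vector bundle $\mathcal{V}_k$ whose value on each $X\in Sm_n$ is the sheaf $\GL_X(N_k(\widehat{A}_n))$, with étale functoriality supplied by the naturality of the coordinate torsor $\MX$ under étale maps. By Theorem \ref{thm:formloc}, for any choice of locally free lift $A(X)$ we have an isomorphism $\widetilde{N_k}(A(X))\cong\GL_X(N_k(\widehat{A}_n))=\mathcal{V}_k(X)$ of vector bundles on $X$. Since $\mathcal{V}_k$ is a functor by construction, this exhibits $X\mapsto N_k(A(X))$ as a natural vector bundle.

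The main conceptual obstacle is precisely the independence from the choice of lift $A(X)$: distinct lifts $A$ and $A'$ of the same $X$ are merely isomorphic (non-canonically), not equal, and the induced isomorphism of lower central series components is only determined up to the ambiguity controlled by Lemma \ref{lem:homlift}. This is sidestepped here because Theorem \ref{thm:formloc} identifies $\widetilde{N_k}(A(X))$ with an object $\GL_X(N_k(\widehat{A}_n))$ built directly from $X$; the residual work is the routine bookkeeping outlined above to confirm objecthood in $Vec(\Dn/W_n)$ and to track the étale functoriality through the equivalences of diagram (\ref{eq:nvbun}).
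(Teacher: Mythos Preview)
Your proposal is correct and follows essentially the same approach as the paper. The paper's justification is compressed into a single sentence preceding the corollary---``Applying the construction of Proposition \ref{prop:astar-assoc} we may regard $N_k(\widehat{A}_n)$ as an object of $\mathcal{W}^{int}_{n,\mathcal{O}}$-mod. Thus, combining Theorem \ref{thm:formloc} and the existence of Kapranov's locally free lifts $A(X)$ of any smooth $X$, we have:''---and your proposal simply unpacks this: verify $N_k(\widehat{A}_n)\in Vec(\Dn/W_n)$, apply $\GL$ from diagram (\ref{eq:nvbun}), and invoke Theorem \ref{thm:formloc} to identify the result with $N_k(A(X))$ independently of the chosen lift.
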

\begin{rem} It should be noted here (see \cite{K}, Remark 1.6.4) that the thickenings $A(X)$ constructed by Kapranov are not themselves functorial in $X$, only the components $N_k(A(X))$ are, by Theorem \ref{thm:formloc}.
\end{rem}

\begin{proof}[Proof of Theorem \ref{thm:formloc}]
Let $p:\MX\to X$ denote the canonical projection, and let $U\subset X$ open.  
For each $x\in X$, we have canonical maps,  
\begin{equation}
\label{eq:rhox}
  \rho_x: \nks(U) \to (\nks)_{(\mathfrak{m}_x)} \stackrel{\sim}{\to} N_k(A_{(\mathfrak{m}_x + M_2)}),
\end{equation}
composing the restriction to stalks with the isomorphism of Corollary \ref{cor:ncloc}.  We define:
$$\varphi:  \nks(U) \to \Gamma(p\inv(U), \MX \times N_k(\widehat{A}_n)),$$
$$\varphi(f)(x, \psi) = \widetilde{\psi}(\rho_x(f)),$$ 
where $\widetilde{\psi}$ is the lift of $\psi$ constructed in Lemma \ref{lem:homlift}.

\begin{prop}
The section $\varphi(f)$ is $\Ghat$-equivariant, for all $f \in \nks(U)$.
\end{prop}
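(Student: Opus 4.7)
The plan is to reduce $\Ghat$-equivariance to the cocycle identity $\widetilde{g\circ\psi}=\widetilde{g}\circ\widetilde{\psi}$ established in the second clause of Lemma \ref{lem:homlift}; once conventions are unwound, the verification becomes a one-line chase. Specifically, I would fix the right action of $\Ghat=Aut(\Dn)$ on $\MX$ to be $(x,\psi)\cdot g := (x,\, g\inv\circ\psi)$ (this is a genuine right action, since $g\inv\circ h\inv = (hg)\inv$), and recall that under this convention the $\Ghat$-module structure on $N_k(\widehat{A}_n)$ entering the associated-bundle construction is precisely the integrated $Aut(\Dn)$-action described just before the theorem, under which $g\in\Ghat$ acts by the canonical map $\widetilde{g}$ of Lemma \ref{lem:homlift} (with $\widehat{A}=\widehat{A}_n$).

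Next I would observe that $\rho_x(f)\in N_k(A_{(\mathfrak{m}_x+M_2)})$ depends only on the point $x\in X$ and not on any chosen formal coordinate $\psi$: from the definition \eqref{eq:rhox}, $\rho_x$ is the composition of restriction to the stalk at $x$ with the intrinsic isomorphism supplied by Corollary \ref{cor:ncloc}, and neither step involves $\psi$. This is important, since otherwise $\varphi(f)((x,\psi)\cdot g)$ would have a hidden extra dependence on $g$ that would need to be tracked.

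With these two observations in hand, the equivariance falls out directly. For $(x,\psi)\in p\inv(U)$ and $g\in\Ghat$,
$$\varphi(f)((x,\psi)\cdot g) \;=\; \widetilde{g\inv\circ\psi}(\rho_x(f)) \;=\; \widetilde{g\inv}\bigl(\widetilde{\psi}(\rho_x(f))\bigr) \;=\; g\inv\cdot\varphi(f)(x,\psi),$$
where the middle equality is the cocycle identity from Lemma \ref{lem:homlift} and the final equality uses the identification of $\widetilde{g\inv}$ with the $\Ghat$-action of $g\inv$ on $N_k(\widehat{A}_n)$. The only real obstacle is the bookkeeping of conventions for the torsor action versus the fiber action; the content of the proposition was built into Lemma \ref{lem:homlift} precisely so that no further calculation is required.
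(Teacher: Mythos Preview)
Your proof is correct and follows essentially the same route as the paper's: both reduce the equivariance check to the cocycle identity $\widetilde{g\circ\psi}=\widetilde{g}\circ\widetilde{\psi}$ from Lemma \ref{lem:homlift}, together with the observation that $\rho_x(f)$ is independent of $\psi$. The paper's version is terser and phrases the torsor action as a left action $(x,\psi)\mapsto(x,g\circ\psi)$, verifying $g\cdot\varphi(f)(x,\psi)=\varphi(f)(x,g\circ\psi)$ directly, but this is the same computation up to your choice of right-action convention.
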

\begin{proof}
We compute:
\begin{align*}
    g \cdot\left(\varphi(f)(x, \psi)\right) &= g \cdot (\widetilde{\psi}(\rho_x(f))) \\
    &= \widetilde{g}(\widetilde{\psi}(\rho_x(f)))\\
    &= (\widetilde{g \circ \psi})(\rho_x(f))\\
    & = \varphi(f)(x, g\circ\psi).
  \end{align*}
\end{proof}

By descent, $\varphi(f)$ defines a section in $\Gamma(U,N_X)\cong \Gamma(p^{-1}(U),\MX\times N_k(\widehat{A}_n))^{\Ghat}$, which we also call $\varphi(f)$.

\begin{prop}
  \label{prop:flat}
  The section $\varphi(f)$ is flat, for all $f\in \nks(U)$.
\end{prop}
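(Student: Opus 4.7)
The plan is to verify flatness locally near each point, reduce to the universal case $A=\widehat{A}_n$ via local freeness, and then recognize $\varphi(f)$ as the explicit flat section $f_m$ from the proof of the equivalence $ev_{\Dn}\circ\GL \cong \mathrm{id}$.

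By Remark \ref{connection-one-form}, the flatness of $\varphi(f)$ on a trivializing open reduces to the identity $d\varphi(f)(\xi)=\xi\circ\varphi(f)$ for every tangent vector $\xi$, so it suffices to verify the claim in a formal neighborhood of an arbitrary $x_0\in U$. Fixing a formal coordinate system $\psi_0$ at $x_0$, local freeness and Corollary \ref{cor:ncloc} furnish a lift to an isomorphism $A_{(\mathfrak{m}_{x_0}+M_2)}\xrightarrow{\sim}\widehat{A}_n$. Setting $m:=\widetilde{\psi}_0(\rho_{x_0}(f))\in N_k(\widehat{A}_n)$, a point of $\MX$ in the formal neighborhood of $(x_0,\psi_0)$ is parameterized by $y\in\Dn$, corresponding to a nearby point $x'$ and the translated coordinate system $\psi_y$ that sends $t_i\mapsto x_i-y_i$.

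The heart of the argument is the Taylor identity
\[ \varphi(f)(x',\psi_y) \;=\; \widetilde{\psi}_y(\rho_{x'}(f)) \;=\; e^{\sum_i y_i\partial_i}\cdot m \;=\; \sum_I \tfrac{1}{I!}\, y^I(\partial_I m), \]
obtained by observing that, under the identification $A_{(\mathfrak{m}_{x'}+M_2)}\xrightarrow{\sim}\widehat{A}_n$ provided by the lift of $\psi_y$, the localization $\rho_{x'}(f)$ corresponds to the series obtained from $\rho_{x_0}(f)$ by shifting each generator $x_i\mapsto x_i+y_i$. The right-hand side of the displayed equation is precisely the canonical flat section $f_m$ appearing in the proof of the equivalence $ev_{\Dn}\circ\GL\cong\mathrm{id}$, and is manifestly flat by construction. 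Hence $\varphi(f)$ coincides locally with a flat section, completing the proof.

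The principal technical step will be the rigorous verification of this Taylor expansion identity, i.e.\ showing that $\widetilde{\psi}_y\circ\rho_{x'} = e^{y\cdot\partial}\circ\widetilde{\psi}_0\circ\rho_{x_0}$ as maps into $N_k(\widehat{A}_n)$. I expect this to follow from the compositional compatibility of the lifts furnished by the second assertion of Lemma \ref{lem:homlift} together with the functoriality of completion in Corollary \ref{cor:ncloc}, but some care is required to handle the change of basepoint correctly through the two successive localizations.
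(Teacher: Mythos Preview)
Your approach is correct in outline and will go through once the Taylor identity is checked, but it is a genuinely different (and more elaborate) route than the paper's.  The paper's argument is a one-liner: unwinding the Harish-Chandra connection as in Remark \ref{connection-one-form}, flatness of $\varphi(f)$ is exactly the statement that for a vector field $\xi$ on $X$, the induced derivation $\xi_x$ acting on the completed fiber $N_k(A_{(\mathfrak{m}_x)})$ agrees with the completion of $\xi$ acting on $N_k(A)$.  That compatibility is immediate from Theorem \ref{thm:ncloc}, since the isomorphism there is given by multiplication and hence intertwines derivations.

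Your argument is the \emph{integrated} version of the same idea: instead of verifying compatibility with a single infinitesimal derivation, you exponentiate to a formal translation $e^{y\cdot\partial}$ and recognize $\varphi(f)$ as the canonical flat section $f_m$ from the proof of $ev_{\Dn}\circ\GL\cong\mathrm{id}$.  This has the virtue of producing a closed-form expression for $\varphi(f)$ and of tying the construction explicitly back to the categorical equivalence, but the cost is real: you must set up the formal neighborhood of $(x_0,\psi_0)$ inside $\mathcal{M}_X$, identify the two completions $A_{(\mathfrak{m}_{x_0}+M_2)}$ and $A_{(\mathfrak{m}_{x'}+M_2)}$ via the lifted coordinate system, and then track the change of basepoint through both $\rho_{x'}$ and $\widetilde{\psi}_y$ --- exactly the step you flag as requiring care.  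The paper's infinitesimal argument sidesteps all of this bookkeeping.  In short, both proofs rest on the same fact (compatibility of the localization isomorphism with vector fields), but the paper checks it at the Lie-algebra level while you check it at the formal-group level.
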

\begin{proof}
A vector field $\xi$ gives rise to a family $\xi_x$ of derivations of $\mathcal{O}(X_{(\mathfrak{m}_x)})$ at each smooth point $x\in X$.  By definition of the Harish-Chandra connection, $\phi(f)$ is flat, and only if, the action of $\xi_x$ on each $N_k(A_{(x)})$ agrees with the action of $\xi$ on $N_k(A)_{(x)}$.  Thus flatness of $\phi(f)$ follows from the compatibility of the isomorphism of Theorem \ref{thm:ncloc} with derivations.
\end{proof}

Thus, the map $\varphi$ defines a map of sheaves, $\varphi:\widetilde{N_k}(A)\to \GLNk$.  It suffices to check that $\varphi$ is an isomorphism on stalks, which follows from Corollary \ref{cor:ncloc}.
\end{proof}

It now follows from Remark \ref{fdequiv} that the data of the lower central series components $N_k$, on any locally free algebra $A$ of rank $n$, are completely determined by the standard fiber of $N_k(A_n)$.  This is computationally useful even for $N_k(A_n)$ itself, as the examples below illustrate.

\begin{example}
A basis for the standard fiber $St(N_3(A_n))=N_3/A_n^+N_3$ is: $$\{[x_i, [x_i, x_j]], [x_i, x_j][x_k, x_l]\}.$$
We conclude that $St(N_3(A_2))$ is the $W_n^0$-module
\[ 0 \to V_{(2,2)} \to St(N_3(A_2)) \to V_{(2,1)} \to 0. \]
The sequence is not split, as $x_2^2\partial_2[x_1,[x_1,x_2]] = 2[x_1,x_2]^2$, modulo $A_n^+N_3$.
This may be contrasted with \cite{EKM}, Proposition 5.3, which derives a \emph{split} short exact sequence:
\[ 0 \to \mathcal{F}_{(2, 2)} \to N_3(A_2) \to \mathcal{F}_{(2, 1)} \to 0 \]
of $W_n$-modules, where $\mathcal{F}_{(m, n)}$ denotes the tensor field module co-induced from the $\mathfrak{gl}_2$-module $V_{(m,n)}$.  However, the splitting does not commute with the action of $\mathcal{O}_{\mathbb{A}^2}$.
\end{example}
\begin{example} In \cite{Ker}, Theorem 1.1, a bound is established on the degree $|\lambda|$  of tensor field modules $\mathcal{F}_\lambda$ appearing in a $W_n$ composition series of $N_k(A_n)$:
\[|\lambda| \leq
\begin{cases}
  2k - 2 & k\text{ odd;} \\
  2k - 2 + 2\lfloor\frac{n - 2}{2}\rfloor & k\text{ even.}
\end{cases}
\]
Kerchev's bound can be re-interpreted as a bound on the degree $|\lambda|$ of the $\gl_n$-representation $V_\lambda$ appearing in $St(N_k(A_n))$, which can be established simply by bounding the degree of elements of a spanning set.  Using \cite{BJ}, Corollary 1.5 it is easy to show that $St(N_k(A))$ is spanned by elements of the form,
$$ a\star[l_1,[\cdots[l_{k-1},l_k]\cdots ],$$
where $l_1$, $l_k$ have degree at most one, $l_2,\ldots, l_{k-1}$ have degree at most 2, and $a$ is a product of simple brackets $[x_i,x_j]$.  If $k$ is odd, then $M_2L_k\subset M_{k+1}$ in that case, proving the bound.  If $k$ is even, $a$ may be non-trivial, but we may assume $a[l_{k-1}]l_k\neq 0 \mod M_3$; otherwise:

\begin{align*}a\star[l_1,[\cdots,[l_{k-1},l_k]\cdots] &= [a\star l_1,[\cdots,[l_{k-1},l_k]\cdots] \\
&= [l_1,[a\star l_2,[\cdots,[l_{k-1},l_k]\cdots]\\
&= \cdots\\
& = [l_1,[\cdots,a\star[l_{k-1},l_k]]\cdots].
\end{align*}

by repeated application of Kerchev's Lemma 2.1.
\end{example}

\bibliographystyle{amsalpha}
\bibliography{LCS}

\end{document}